\def\frk{\frak}               % font for "Fraktur"
\def\Phi{{\frk n}}
\def\Phi{{\frk N}}
\def\KK{{\mathbb K}}
\def\opn#1#2{\def#1{\operatorname{#2}}} % to make operators
\opn\chara{char}
\opn\length{\ell}
\opn\pd{pd}
\opn\rk{rk}
\opn\projdim{proj\,dim}
\opn\cohdim{cd}
\opn\injdim{inj\,dim}
\opn\rank{rank}
\opn\depth{depth}
\opn\grade{grade}
\opn\height{height}
\opn\embdim{emb\,dim}
\opn\codim{codim}
\opn\Tr{Tr}
\opn\bigrank{big\,rank}
\opn\superheight{superheight}
\opn\lcm{lcm}
\opn\trdeg{tr\,deg}
\opn\reg{reg}
\opn\hilb{Hilb}
\opn\hpolynomial{h}
\opn\cdeg{cdeg}
\opn\lreg{lreg}
\opn\ini{in}
\opn\lpd{lpd}
\opn\size{size}
\opn\bigsize{bigsize}
\opn\cosize{cosize}
\opn\bigcosize{bigcosize}
\opn\sdepth{sdepth}
\opn\sreg{sreg}
\opn\link{link}
\opn\fdepth{fdepth}
\opn\lin{lin}
\opn\ini{in}
\opn\div{div}
\opn\Div{Div}
\opn\cl{cl}
\opn\Cl{Cl}
\opn\Spec{Spec}
\opn\Supp{Supp}
\opn\supp{supp}
\opn\Sing{Sing}
\opn\Ass{Ass}
\opn\Min{Min}
\opn\Mon{Mon}
\opn\dstab{dstab}
\opn\astab{astab}
\opn\Syz{Syz}
\opn\Ann{Ann}
\opn\Rad{Rad}
\opn\Soc{Soc}
\opn\Im{Im}
\opn\Ker{Ker}
\opn\Coker{Coker}
\opn\Am{Am}
\opn\Hom{Hom}
\opn\Tor{Tor}
\opn\Ext{Ext}
\opn\End{End}
\opn\Aut{Aut}
\opn\id{id}
\opn\nat{nat}
\opn\pff{pf}%   \pf existss already
\opn\Pf{Pf}
\opn\GL{GL}
\opn\SL{SL}
\opn\mod{mod}
\opn\ord{ord}
\opn\Gin{Gin}
\opn\Hilb{Hilb}
\opn\sort{sort}
\opn\initial{init}
\opn\ende{end}
\opn\height{height}
\opn\type{type}
\opn\mdeg{mdeg}
\opn\aff{aff}
\opn\con{conv}
\opn\relint{relint}
\opn\st{st}
\opn\lk{lk}
\opn\cn{cn}
\opn\core{core}
\opn\vol{vol}
\opn\link{link}
\opn\lex{lex}
\opn\sign{sign}
\opn\gr{gr}
\def\pot#1#2{#1[\kern-0.28ex[#2]\kern-0.28ex]}
\opn\dirlim{\underrightarrow{\lim}}
\opn\inivlim{\underleftarrow{\lim}}
\def\Implies{\ifmmode\Longrightarrow \else
	\unskip${}\Longrightarrow{}$\ignorespaces\fi}
\def\implies{\ifmmode\Rightarrow \else
	\unskip${}\Rightarrow{}$\ignorespaces\fi}
\def\iff{\ifmmode\Longleftrightarrow \else
	\unskip${}\Longleftrightarrow{}$\ignorespaces\fi}
\newtheorem{Theorem}{Theorem}[section]
\newtheorem{Lemma}[Theorem]{Lemma}
\newtheorem{Corollary}[Theorem]{Corollary}
\newtheorem{Proposition}[Theorem]{Proposition}
\newtheorem{Remark}[Theorem]{Remark}
\newtheorem{Example}[Theorem]{Example}
\newtheorem{Definition}[Theorem]{Definition}
\let\epsilon\varepsilon
\def\pnt{{\raise0.5mm\hbox{\large\bf.}}}
\begin{document}
	\title{binomial edge ideals of small depth}
	\author {M. Rouzbahani Malayeri, S. Saeedi Madani, D. Kiani}
		
	\address{Mohammad Rouzbahani Malayeri, Department of Mathematics and Computer Science, Amirkabir University of Technology (Tehran Polytechnic), Tehran, Iran}
	\email{m.malayeri@aut.ac.ir}
	
		\address{Sara Saeedi Madani, Department of Mathematics and Computer Science, Amirkabir University of Technology (Tehran Polytechnic), Tehran, Iran, and School of Mathematics, Institute for Research in Fundamental Sciences (IPM), Tehran, Iran}
	\email{sarasaeedi@aut.ac.ir}
	
		\address{Dariush Kiani, Department of Mathematics and Computer Science, Amirkabir University of Technology (Tehran Polytechnic), Tehran, Iran, and School of Mathematics, Institute for Research in Fundamental Sciences (IPM), Tehran, Iran}
	\email{dkiani@aut.ac.ir}

	\begin{abstract}
Let $G$ be a graph on $[n]$ and $J_G$ be the binomial edge ideal of $G$ in the polynomial ring $S=\KK[x_1,\ldots,x_n,y_1,\ldots,y_n]$. In this paper we investigate some topological properties of a poset associated to the minimal primary decomposition of $J_G$. We show that this poset admits some specific subposets which are contractible. This in turn, provides some interesting algebraic consequences. In particular, we characterize all graphs $G$ for which $\depth S/J_G=4$.

	\end{abstract}

	%\thanks{$^\ast$ Corresponding author}
	%\thanks{}
	
	\subjclass[2010]{05E40; 13C15}
	\keywords{Binomial edge ideals, depth, Hochster type formula, meet-contractible.}
	
	\maketitle
	
\section{Introduction}\label{intro}
Binomial edge ideals were introduced in $2010$ by Herzog, Hibi, Hreinsd{\'o}ttir, Kahle and Rauh in \cite{HHHKR} and independently by Ohtani in \cite{O}. Let $G$ be a simple graph on the vertex set $[n]$ and the edge set $E(G)$. Let $S=\KK[x_1,\ldots,x_n,y_1,\ldots,y_n]$ be the polynomial ring over $2n$ variables where $\KK$ is a field. Then the \textit{binomial edge ideal} associated to the graph $G$ denoted by $J_G$ is an ideal in $S$ which is defined as follows:
\[
J_G=(x_iy_j-x_jy_i: \{i,j\}\in E(G), 1\leq i<j\leq n).
\] 
This class of ideals could be interpreted as a natural generalization of the well-studied so-called \textit{determinantal ideal} of the $(2\times n)$-matrix
\[X=\left[
\begin{array}{ccc}
x_1 & \cdots & x_n \\
y_1 & \cdots & y_n \\
\end{array}
\right].
\]	
\par The study of algebraic properties as well as numerical invariants of binomial edge ideals has attracted a considerable attention in the meantime, see e.g. \cite{A, BN, BMS, EHH, ERT, EZ, KS, KS2, KumarS, MM, RSK, SK, SK2}.
\par One of the interesting homological invariants associated to binomial edge ideals is depth. Let $H_{\mathfrak{m}}^{i}(S/J_G)$ denote the $i^{th}$ local cohomology module of $S/J_G$ supported on the irrelevant maximal ideal $\mathfrak{m}=(x_1,\ldots,x_n,y_1,\ldots,y_n)$. Then we have
\[
\depth S/J_G=\min \{i:H_{\mathfrak{m}}^{i}(S/J_G)\neq (0)\}.
\]
\par While computing the depth of the binomial edge ideal of a graph is hard in general, there have been several attempts to get some interesting results in this direction for some special families of graphs. Moreover, some lower and upper bounds for the depth of binomial edge ideal of graphs have been obtained by several authors which will be briefly discussed in the sequel.
\par Let $C_n$ denote the cycle on $n$ vertices. In \cite{ZZ} it was shown that $\depth S/J_{C_{n}}=n$, for $n>3$. Also, in \cite{EHH} the authors showed that $\depth S/J_G=n+1$, for a connected \textit{block} graph~$G$. Later in \cite{KS1} the authors computed the depth of a wider class of graphs which are called \textit{generalized block} graphs. In \cite{KumarS}, a nice formula was given for the depth of the join product of two graphs $G_1$ and $G_2$. Roughly speaking, the join product of two graphs $G_1$ and $G_2$, denoted by $G_1\ast G_2$, is the graph which is obtained from the union of $G_1$ and $G_2$ by joining all the vertices of $G_1$ to vertices of $G_2$, (the precise definition is given in Section 2). 
\par In \cite{BN} the authors gave an upper bound for the depth of the  binomial edge ideal of a graph in terms of some graphical invariants. Indeed, they showed that for a non-complete connected graph $G$, $\depth S/J_G\leq n-\kappa(G)+2$, where $\kappa(G)$ denotes the vertex connectivity of $G$. 
\par There is also a lower bound for $\depth S/J_G$. Indeed, let $\cohdim(J_G,S)$ denote the cohomological dimension of $J_G$ in $S$, namely,  $\cohdim(J_G,S)=\max\{i:H^{i}_{J_G}(S)\neq (0)\}$. Now a result in \cite{F}  due to Faltings implies that
\begin{equation}\label{falting}
\cohdim(J_G,S)\leq 2n-\Bigl\lfloor \dfrac{2n-1}{\mathrm{bigheight}\hspace{0.9mm}J_G}\Bigr\rfloor,
\end{equation}
whenever $J_G\neq (0)$. On the other hand, since $S/J_G$ is a cohomologically full ring by a result in \cite{CV}, (see \cite{DDM} for the definition of cohomologically full rings),   by \cite{DDM} we have that  
\begin{equation}\label{dao}
\depth S/J_G\geq 2n-\cohdim(J_G,S).
\end{equation}
Hence, by \eqref{falting} and \eqref{dao}, we get the following lower bound for the depth of $S/J_G$:
\begin{equation}\label{general}
\depth S/J_G\geq \Bigl\lfloor \dfrac{2n-1}{\mathrm{bigheight}\hspace{0.9mm}J_G}\Bigr\rfloor .
\end{equation}
\par In this paper we apply some results and techniques from the topology of posets to study the depth of binomial edge ideals. We are interested in studying binomial edge ideals of small depth. Specifically, we characterize all graphs $G$ with $\depth S/J_G=4$. This is based on a Hochster type decomposition formula for the local cohomology modules of binomial edge ideals provided recently by \`Alvarez Montaner in \cite{A}.
\par This paper is organized as follows. In Section \ref{per}, we fix the notation and review some definitions and some known facts that will be used throughout the paper.
\par In Section \ref{hochster type}, we associate a poset to the binomial edge ideal of a graph. Then, we state in Theorem \ref{Hochster} the Hochster type formula for the local cohomology modules of binomial edge ideals arised from \cite[Theorem~3.9]{A}.  
\par Section \ref{topology} is devoted to extract some topological properties like contractibility of some specific subposets of the poset associated to binomial edge ideals which is introduced in Definition \ref{typo}.
\par In Section \ref{char}, in Theorem \ref{lower}, we supply a lower bound for the depth of binomial edge ideals. In particular, we show that   
$\depth S/J_G\geq 4$, where $G$ is a graph with at least three vertices. Then, by using the aforementioned lower bound and also by the provided ingredients in Section \ref{topology}, we characterize all graphs $G$ with $\depth S/J_G=4$, in Theorem \ref{small}.

\section{Preliminaries}\label{per}
In this section we review some notions and facts that will be used throughout the paper. In this paper all graphs are assumed to be simple (i.e. with no loops, directed and multiple edges).
\par \medskip Let $G$ be a graph on $[n]$ and $T\subseteq [n]$. A subgraph $H$ of $G$ on the vertex set $T$ is called an \emph{induced subgraph} of $G$, whenever for any two vertices $i,j\in T$ such that $\{i,j\}\in E(G)$, one has $\{i,j\}\in E(H)$. Moreover, by $G-T$, we mean the induced subgraph of $G$ on the vertex set $[n]\backslash T$. A vertex $i\in [n]$ is said to be a \emph{cut vertex} of $G$ whenever $G-\{i\}$ has more connected components than $G$. We say that $T$ has \emph{cut point property} for $G$, whenever each $i\in T$ is a cut vertex of the graph $G-(T\backslash \{i\})$. In particular,  the empty set $\emptyset$, has cut point property for $G$. We denote by $\mathcal{C}(G)$, the family of all subsets $T$ of $[n]$ which have the cut point property for $G$. Namely,
\[
\mathcal{C}(G)=\{T\subseteq [n]:T~\mathrm{has~cut~point~property~for}~G\}.
\]
\par Let $G_1$ and $G_2$ be two graphs on the disjoint vertex sets $V(G_1)$ and $V(G_2)$, respectively. Then by the \emph{join product} of $G_1$ and $G_2$, denoted by $G_1*G_2$, we mean the graph on the vertex set $V(G_1)\cup V(G_2)$ and the edge set 
\[
E(G_1)\cup E(G_2)\cup \{\{u,v\}: u\in V(G_1)~\mathrm{and}~v\in V(G_2)\}.
\]
\par Let $G$ be a graph and $T\subseteq [n]$. Assume that $G_1 , \ldots , G_{c_G(T)}$ are the connected components of $G-T$. Let $\widetilde{G}_1 , \ldots , \widetilde{G}_{c_G(T)}$ be the complete graphs on the vertex sets $V(G_1), \ldots , V(G_{c_G(T)})$, respectively, and let
\[
P_T(G)=(x_i,y_i)_{i\in T} +J_{\widetilde{G}_1}+\cdots +J_{\widetilde{G}_{c_G(T)}}.
\]
Then it is easily seen that
\[
\mathrm{height}\hspace{0.9mm}P_T(G)=n-c_G(T)+|T|.
\]
Also, in \cite[Corollary~3.9]{HHHKR}, it was shown that $P_T(G)$ is a minimal prime ideal of $J_G$ if $T$ has cut point property for $G$. Moreover, it was proved in \cite[Corollary~2.2]{HHHKR} that $J_G$ is a radical ideal. So, $J_{G}=\bigcap\limits_{\substack{T\in \mathcal{C}(G)}}P_T(G)$.
\par \medskip Let $\Delta$ be a simplicial complex. Recall that the \emph{$1$-skeleton} graph of $\Delta$ is the subcomplex of $\Delta$ consisting of all of the faces of $\Delta$ which have cardinality at most $2$. The simplicial complex $\Delta$ is said to be \emph{connected} if its $1$-skeleton graph is connected.
\par \medskip Let $(\mathcal{P},\preccurlyeq)$ be a poset. Recall that the \emph{order complex} of $\mathcal{P}$, denoted by $\Delta(\mathcal{P})$, is the simplicial complex whose facets are the maximal chains in $\mathcal{P}$. If $\mathcal{P}$ is an empty poset, then we consider $\Delta(\mathcal{P})=\{\emptyset\}$, i.e. the empty simplicial complex.

\section{Hochster type formula}\label{hochster type}
In this section we focus on a Hochster type formula for the local cohomology modules of binomial edge ideals recently provided by \`Alvarez Montaner in \cite{A}. First we need to recall the definition of a poset associated to the binomial edge ideal of a graph $G$ from \cite{A}. 
\par Let $I$ be an ideal in the polynomial ring $S$ and $I=q_1\cap \cdots \cap q_t$ be a not necessarily minimal decomposition for the ideal $I$. Now, the set of all possible sums of ideals in this decomposition forms a poset ordered by the reverse inclusion and is denoted by $\mathcal{P}_I$. In the special case, we use the notation $\mathcal{P}_{G}$, instead of $\mathcal{P}_{J_G}$, for the poset arised from the minimal primary decomposition of $J_G$.
\par Now, the following, is the definition of a poset associated to the binomial edge ideal of a graph $G$ which was introduced in \cite[Definition~3.3]{A}. 
\par Let $G$ be a graph. Associated to $J_G$ is the following poset which is denoted by $\mathcal{A}_{G}$: The ideals contained  in $\mathcal{A}_{G}$ are the prime ideals in $\mathcal{P}_{G}$, the prime ideals in the posets $\mathcal{P}_{I}$ arised from the minimal primary decomposition of every non prime ideal $I$ in $\mathcal{P}_{G}$ and the prime ideals that are obtained by repeating this procedure every time a non prime ideal is discovered.  
\par \medskip Note that for some technical goals that will be discussed later, we need to consider another poset associated to binomial edge ideals. Indeed, the importance of our new poset will be exhibited when we study the topological properties of some specific subposets of it, see Lemma \ref{vital}, Theorem~\ref{north} and also Remark \ref{remark}.
\par Now, inspired by the \`Alvarez Montaner's definition, we define a new poset associated to the binomial edge ideal of a graph $G$ as follows: 
\begin{Definition}\label{typo}
\em{Let $G$ be a graph on $[n]$ and $J_{G}=\bigcap\limits_{\substack{T\in \mathcal{C}(G)}}P_T(G)$ be the minimal primary decomposition of $J_G$.  Associated to this decomposition, we consider the poset $(\mathcal{Q}_G,\preccurlyeq)$ ordered by reverse inclusion which is made up of the following elements:
\begin{itemize}
\item
the prime ideals in the poset $\mathcal{P}_{G}$,
\item 
the prime ideals in the posets $\mathcal{P}_{I}$, arised from the following type of decompositions:
\[
I=q_1\cap q_2 \cap \cdots \cap q_t \cap
(q_1+P_\emptyset(G)) \cap (q_2+P_\emptyset(G)) \cap \cdots \cap (q_t+P_\emptyset(G)),\] 
where $I$'s are the non-prime ideals in the poset $\mathcal{P}_{G}$ and 
$q_1, q_2,\ldots, q_t$ are the minimal prime ideals of $I$, and
 \item
the prime ideals that we obtain repeatedly by this procedure every time that we find a non-prime ideal.
\end{itemize}
}
\end{Definition}
It is worth mentioning here that the process of the construction of our poset $\mathcal{Q}_G$ terminates after a finite number of steps just like the construction process of the poset $\mathcal{A}_{G}$. Indeed, as we will see in Corollary~\ref{crucial}, every element $q$ in the poset $\mathcal{Q}_G$ is of the form $P_T(H)$ for some graph $H$ on the vertex set $[n]$ and some $T\subseteq [n]$.
\par Moreover, the main difference between our construction of the poset $\mathcal{Q}_G$ and the construction of the poset $\mathcal{A}_{G}$, is that our construction involves a special decomposition of the form $I=q_1\cap q_2 \cap \cdots \cap q_t \cap
(q_1+P_\emptyset(G)) \cap (q_2+P_\emptyset(G)) \cap \cdots \cap (q_t+P_\emptyset(G))$ for the non-prime ideals $I$, in spite of using the minimal primary decomposition for the ideals $I$ in the construction of $\mathcal{A}_{G}$. It turns out that $\mathcal{A}_{G}$ is a subposet of the poset $\mathcal{Q}_G$. 
 Now, a natural question might be whether the posets $\mathcal{Q}_G$ and $\mathcal{A}_{G}$ coincide in general or not. The following example aims to show that, in general, $\mathcal{Q}_G$ and 
$\mathcal{A}_{G}$ do not coincide. 
\newpage Here, $K_n$ denotes the complete graph on $[n]$.
\begin{Example}
\em{Let $G$ be the graph shown in Figure \ref{Sa}. One could see that
\[\mathcal{C}(G)=\{T: T\subseteq \{2,4,7,9\}\}.\]
Now, since  
\[(x_2,y_2,x_3,y_3,x_4,y_4,x_7,y_7,x_8,y_8,x_9,y_9)\in \Min(P_{\{2,9\}}(G)+P_{\{4,7\}}(G)),\] we have that $q=(x_2,y_2,x_3,y_3,x_4,y_4,x_7,y_7,x_8,y_8,x_9,y_9)+J_{K_{10}-\{2,3,4,7,8,9\}}\in \mathcal{Q}_G$. However, $q\notin \mathcal{A}_{G}$. Indeed, assume on contrary that $q\in \mathcal{A}_{G}$. Now let $A=\{P_{T_1}(G),\ldots,P_{T_s}(G)\}$ be a subset of the maximal elements of the poset $\mathcal{A}_{G}$ with the property that $I=\sum_{i=1}^{s}P_{T_i}(G)$ creates the element $q$ in the process of construction of the poset $\mathcal{A}_{G}$. We call the set $A$ with such property, a \emph{predecessor} for the element $q$. Now, regarding the described  structure of the minimal prime ideals of $J_G$, and also by using the fact that the vertices $5$ and $10$ are adjacent in the graph $K_{10}-\{2,3,4,7,8,9\}$, one could easily check that $P_{\{4\}}(G)\in A$ and  $P_{\{9\}}(G)\in A$. This implies that 
\[I=(x_j,y_j:j\in \cup_{i=1}^{s}T_i)+J_L,\]
where $L$ is the join product of two isolated vertices $5$ and $10$, with the complete graph on the vertex set $\{1,2,3,6,7,8\}$. So, $I=q_1\cap q_2$ is the minimal primary decomposition for $I$, where
\[q_1=(x_j,y_j:j\in \cup_{i=1}^{s}T_i)+(x_k,y_k:k\in \{1,2,3,6,7,8\})\]
and
\[q_2=(x_j,y_j:j\in \cup_{i=1}^{s}T_i)+P_\emptyset(L). \]
Now, since $q_1+q_2$ is a prime ideal and $q\notin\{q_1,q_2,q_1+q_2\}$, we get a contradiction with the fact that $A$ is a predecessor for $q$.
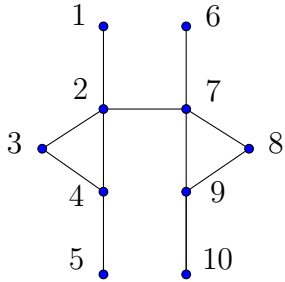
\begin{figure}[H]
\centering
\begin{tikzpicture}[scale=1.1,line cap=round,line join=round,>=triangle 45,x=1.0cm,y=1.0cm]
\draw (-1.,2.)-- (-1.,1.);
\draw (-1.,1.)-- (-1.,0.);
\draw (-1.,0.)-- (-1.,-1.);
\draw (-1.,1.)-- (-1.74,0.52);
\draw (-1.74,0.52)-- (-1.,0.);
\draw (0.76,0.52)-- (0.,1.);
\draw (0.76,0.52)-- (0.,0.);
\draw (0.,1.)-- (-1.,1.);
\draw (0.,0.)-- (0.,-1.);
\draw (0.,0.)-- (0.,-1.);
\draw (0.,2.)-- (0.,1.);
\draw (0.,1.)-- (0.,0.);
\draw (-1.52,2.4) node[anchor=north west] {$1$};
\draw (-1.5,1.5) node[anchor=north west] {$2$};
\draw (-2.3,0.85) node[anchor=north west] {$3$};
\draw (-1.55,0.2) node[anchor=north west] {$4$};
\draw (-1.55,-0.56) node[anchor=north west] {$5$};
\draw (0.1,2.38) node[anchor=north west] {$6$};
\draw (0.1,1.48) node[anchor=north west] {$7$};
\draw (0.86,0.85) node[anchor=north west] {$8$};
\draw (0.16,0.25) node[anchor=north west] {$9$};
\draw (0.06,-0.56) node[anchor=north west] {$10$};
\begin{scriptsize}
\draw [fill=blue] (-1.,2.) circle (1.5pt);
\draw [fill=blue] (-1.,1.) circle (1.5pt);
\draw [fill=blue] (-1.,0.) circle (1.5pt);
\draw [fill=blue] (-1.,-1.) circle (1.5pt);
\draw [fill=blue] (-1.74,0.52) circle (1.5pt);
\draw [fill=blue] (0.,1.) circle (1.5pt);
\draw [fill=blue] (0.,2.) circle (1.5pt);
\draw [fill=blue] (0.,0.) circle (1.5pt);
\draw [fill=blue] (0.,-1.) circle (1.5pt);
\draw [fill=blue] (0.76,0.52) circle (1.5pt);
\end{scriptsize}
\end{tikzpicture}
\vspace{.25cm}\caption{A graph $G$ for which $\mathcal{A}_G$ is a proper subposet of $\mathcal{Q}_{G}$.}
\label{Sa}
\end{figure}
 
}
\end{Example}
Now, to complete our discussion, we give an example of a graph $G$ for which $\mathcal{Q}_G=\mathcal{A}_G$. This example was appeared in \cite[Example~3.2]{A}.
\begin{Example}
\em{Let $G$ be the path on $5$ vertices illustrated in Figure \ref{path}. Thus, the minimal prime ideals of $J_G$ are:
$$P_{\emptyset}(G), P_{\{2\}}(G), P_{\{3\}}(G), P_{\{4\}}(G), P_{\{2,4\}}(G).$$ 
So, it was mentioned in \cite[Example~3.2]{A} that the elements of the poset $\mathcal{P}_G$ are:
\noindent \medskip
\newcommand{\splitatcommas}[1]{%
  \begingroup
  \ifnum\mathcode`,="8000
  \else
    \begingroup\lccode`~=`, \lowercase{\endgroup
      \edef~{\mathchar\the\mathcode`, \penalty0 \noexpand\hspace{0pt plus 1em}}%
    }\mathcode`,="8000
  \fi
  #1%
  \endgroup
}

\newcommand{\tuple}[1]{(\splitatcommas{#1})}
\newcommand{\set}[1]{\{\splitatcommas{#1}\}}

\setlength{\lineskiplimit}{2pt}\setlength{\lineskip}{3pt} % for this particular case

$\splitatcommas{
 P_{\emptyset}(G), P_{\{2\}}(G), P_{\{3\}}(G), P_{\{4\}}(G), P_{\{2,4\}}(G), 
(x_2,y_2,f_{13},f_{14},f_{15},f_{34},f_{35},f_{45}), 
(x_3,y_3,f_{12},f_{14},f_{15},f_{24},f_{25},f_{45}), 
(x_2,y_2,x_3,y_3,f_{45}), 
(x_2,y_2,x_4,y_4,f_{13}),
(x_4,y_4,f_{12},f_{13},f_{15},f_{23},f_{25},f_{35}),
(x_2,y_2,x_4,y_4,f_{35}), 
(x_3,y_3,x_4,y_4,f_{12}), 
{\color{blue}(x_2,y_2,x_4,y_4,f_{13},f_{35})},
(x_2,y_2,x_3,y_3,x_4,y_4), 
(x_2,y_2,x_3,y_3,f_{14},f_{15},f_{45}),
(x_2,y_2,x_4,y_4,f_{13},f_{15},f_{35}), 
(x_3,y_3,x_4,y_4,f_{12},f_{15},f_{25}), 
(x_2,y_2,x_3,y_3,x_4,y_4,f_{15}).
}$ 

\par \medskip Note that $I={\color{blue}(x_2,y_2,x_4,y_4,f_{13},f_{35})}$, is the only non-prime ideal in the poset $\mathcal{P}_G$. Moreover, $I=q_1\cap q_2$ is the minimal primary decomposition for $I$, where
$$q_1=(x_2,y_2,x_4,y_4,f_{13},f_{15},f_{35})$$
and
$$q_2=(x_2,y_2,x_3,y_3,x_4,y_4).$$
Now, according to the construction of the poset $\mathcal{Q}_G$, we need to consider the following type of decomposition for $I$:
\begin{equation}\label{trick}
I=q_1\cap q_2\cap (q_1+P_\emptyset(G))\cap (q_2+P_\emptyset(G)).
\end{equation}
On the other hand, since $q_1+P_\emptyset(G)=q_1$ and $q_2+P_\emptyset(G)=q_1+q_2$, 
the elements of the poset $\mathcal{P}_I$, arised from the decomposition in $\eqref{trick}$ are exactly $q_1$, $q_2$ and $q_1+q_2$. Therefore, the constructions of the posets $\mathcal{Q}_G$ and $\mathcal{A}_G$ imply that 
$\mathcal{Q}_G=\mathcal{A}_G$.
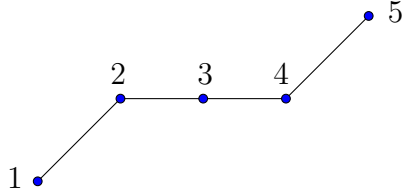
\begin{figure}[H]
\centering
\begin{tikzpicture}[scale=1.1,line cap=round,line join=round,>=triangle 45,x=1.0cm,y=1.0cm]
\draw (-2.,-1.)-- (-1.,0.);
\draw (-1.,0.)-- (0.,0.);
\draw (0.,0.)-- (1.,0.);
\draw (1.,0.)-- (2.,1.);
\draw (-2.5,-0.7) node[anchor=north west] {$1$};
\draw (-1.25,0.55) node[anchor=north west] {$2$};
\draw (-0.2,0.55) node[anchor=north west] {$3$};
\draw (0.72,0.55) node[anchor=north west] {$4$};
\draw (2.1,1.3) node[anchor=north west] {$5$};
\begin{scriptsize}
\draw [fill=blue] (-2.,-1.) circle (1.5pt);
\draw [fill=blue] (-1.,0.) circle (1.5pt);
\draw [fill=blue] (0.,0.) circle (1.5pt);
\draw [fill=blue] (1.,0.) circle (1.5pt);
\draw [fill=blue] (2.,1.) circle (1.5pt);
\end{scriptsize}
\end{tikzpicture}
\vspace{.25cm}\caption{A graph $G$ for which $\mathcal{Q}_{G}=\mathcal{A}_G$.}
\label{path}
\end{figure}

}
\end{Example}
\par Now, we are going to state the Hochster type formula for the local cohomology modules of binomial edge ideals arised from \cite[Theorem~3.9]{A}. First, we need to present the following proposition:
\par \medskip Here, $\Min(J)$, denotes the set of minimal prime ideals of an ideal $J$.
\begin{Proposition}
Let $G_i$ be a graph on $[n]$ and $T_i\subseteq [n]$ for each $1\leq i\leq k$. Let $J=\sum_{i=1}^{k}P_{T_i}(G_i)$ and $q\in \Min(J)$. Then $q=P_T(H)$, for some graph $H$ on $[n]$ and some $T\subseteq [n]$.
\begin{proof}
For each $1\leq i\leq k$, we have $P_{T_i}(G_i)=(x_s,y_s:s\in T_i)+J_{\widetilde{G}_{i1}}+\cdots+J_{\widetilde{G}_{ic_{G_i}(T_i)}}$. Let $H_1,\ldots,H_\ell$ be the connected components of the graph $G=(\bigcup_{i=1}^{k}\bigcup_{j=1}^{c_{G_i}(T_i)}\widetilde{G_{ij}})-\bigcup_{i=1}^{k}T_i$. One could easily see that
\begin{equation}\label{hard}
J=(x_s,y_s:s\in \bigcup_{i=1}^{k}T_i)+J_{H_1}+\cdots+J_{H_\ell}.
\end{equation} 
Now, by \cite[Problem~7.8,~part~(ii)]{HHO}, there exist $U_1,\ldots,U_{\ell}$ with ${U_i}\in \mathcal{C}(H_i)$ for each $1\leq i\leq \ell$, such that $q=(x_s,y_s:s\in \bigcup_{i=1}^{k}T_i)+\sum_{i=1}^{\ell}P_{U_i}(H_i)$. Let $T=(\bigcup_{i=1}^{k}T_i)\cup(\bigcup_{i=1}^{\ell}U_i)$. Now we take $H$ to be the graph which is obtained from the graph $L=(\bigcup_{i=1}^{\ell}\bigcup_{j=1}^{c_{H_i}(U_i)}\widetilde{H_{ij}})$, by adding those elements of $[n]$ which do not belong to the vertex set of the graph $L$, as isolated vertices. Then it follows that $q=P_T(H)$.
\end{proof} 
\end{Proposition}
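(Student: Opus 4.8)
The plan is to reduce the sum $J=\sum_{i=1}^{k}P_{T_i}(G_i)$ to an ideal built from a set of variables together with the binomial edge ideal of a disjoint union of complete graphs, and then to read off its minimal primes. I would lean on two elementary identities about binomial edge ideals. First, a sum of binomial edge ideals is again a binomial edge ideal, namely $J_{H'}+J_{H''}=J_{H'\cup H''}$, where $H'\cup H''$ denotes the graph whose edge set is the union of the two edge sets (since each $J_H$ is generated by the binomials $f_{ab}=x_ay_b-x_by_a$ attached to the edges). Second, adjoining the variables $(x_s,y_s:s\in U)$ to $J_H$ absorbs every generator $f_{ab}$ with an endpoint in $U$, so that $(x_s,y_s:s\in U)+J_H=(x_s,y_s:s\in U)+J_{H-U}$.

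First I would expand each summand as $P_{T_i}(G_i)=(x_s,y_s:s\in T_i)+J_{\widetilde{G}_{i1}}+\cdots+J_{\widetilde{G}_{ic_{G_i}(T_i)}}$ and collect the linear parts. Applying the first identity to the complete-graph pieces, and the second identity to pass to the induced subgraph on the complement of $\bigcup_i T_i$, the whole sum collapses to
\[
J=(x_s,y_s:s\in \bigcup_{i=1}^{k}T_i)+J_{H_1}+\cdots+J_{H_\ell},
\]
where $H_1,\ldots,H_\ell$ are the connected components of $G-\bigcup_i T_i$ and $G=\bigcup_{i,j}\widetilde{G}_{ij}$. Since the $H_r$ sit on pairwise disjoint vertex sets, all disjoint from $\bigcup_i T_i$, the minimal primes of $J$ are formed blockwise: using the standard description of $\Min(J_{H_r})$ as $\{P_{U_r}(H_r):U_r\in\mathcal{C}(H_r)\}$, any $q\in\Min(J)$ has the form $q=(x_s,y_s:s\in\bigcup_i T_i)+\sum_{r=1}^{\ell}P_{U_r}(H_r)$ for suitable $U_r\in\mathcal{C}(H_r)$.

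It then remains to recognise this $q$ as a single $P_T(H)$. I would set $T=(\bigcup_i T_i)\cup(\bigcup_r U_r)$ and expand each $P_{U_r}(H_r)$ into its linear part $(x_s,y_s:s\in U_r)$ and its complete-graph part; writing $L=\bigcup_{r,j}\widetilde{H}_{rj}$ for the union of all these complete graphs yields $q=(x_s,y_s:s\in T)+J_L$. The crucial bookkeeping point, which I expect to require the most care, is that $L$ is already a disjoint union of complete graphs whose vertex set is disjoint from $T$: each $\widetilde{H}_{rj}$ lives on a connected component of $H_r-U_r$, hence avoids $U_r$ and every other block, while $\bigcup_i T_i$ was deleted at the outset. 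Consequently, taking $H$ to be $L$ together with the remaining elements of $[n]$ adjoined as isolated vertices, the connected components of $H-T$ are exactly the $\widetilde{H}_{rj}$ (plus isolated points contributing nothing), so completing them reproduces $L$ and gives $P_T(H)=(x_s,y_s:s\in T)+J_L=q$, as desired.
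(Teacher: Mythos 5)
Your proof is correct and follows essentially the same route as the paper's: the same collapse of $J$ to $(x_s,y_s:s\in\bigcup_i T_i)+J_{H_1}+\cdots+J_{H_\ell}$, the same blockwise description of its minimal primes (which the paper obtains by citing \cite[Problem~7.8,~part~(ii)]{HHO} rather than deriving it from the tensor-product structure), and the same final construction of $T$ and $H$. The only difference is that you spell out the two elementary identities behind the step the paper labels as ``easily seen,'' which is a welcome addition but not a different argument.
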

Note that the above proposition together with the construction of the poset $\mathcal{Q}_G$ imply the next corollary that will be crucial throughout the paper:
\begin{Corollary}\label{crucial}
Let $G$ be a graph on $[n]$. Then, every element $q$ in the poset $\mathcal{Q}_G$ is of the form $P_T(H)$, for some graph $H$ on $[n]$ and some $T\subseteq [n]$.
\end{Corollary}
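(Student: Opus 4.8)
The plan is to reduce everything to the proposition just proved by establishing the following invariant along the recursive construction of $\mathcal{Q}_G$: every ideal that is ever produced during the construction — not only the prime ones retained in $\mathcal{Q}_G$, but also the auxiliary non-prime ideals $I$ and the ideals appearing in their special decompositions — can be written as a finite sum $\sum_{i=1}^{k} P_{T_i}(G_i)$ with each $G_i$ a graph on $[n]$ and each $T_i\subseteq [n]$. Once this is known, the corollary is immediate: an element $q\in\mathcal{Q}_G$ is by construction a prime ideal, and being of the form $\sum_i P_{T_i}(G_i)$ it is its own unique minimal prime, so applying the proposition with $J=q$ yields $q=P_T(H)$.

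I would prove the invariant by induction on the depth at which an ideal is created (the construction terminating after finitely many steps, as already noted). For the base level, the elements of $\mathcal{P}_G$ are exactly the sums $\sum_{T\in\mathcal{S}}P_T(G)$ over subsets $\mathcal{S}\subseteq\mathcal{C}(G)$, which already have the required shape with all $G_i=G$. For the inductive step, suppose $I$ is a non-prime ideal created at some depth and, by the induction hypothesis, $I=\sum_i P_{T_i}(G_i)$. Then $\mathcal{P}_I$ is built from the ideals $q_1,\ldots,q_t$ and $q_1+P_\emptyset(G),\ldots,q_t+P_\emptyset(G)$ of the special decomposition in Definition \ref{typo}, where $q_1,\ldots,q_t$ are the minimal primes of $I$. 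Applying the proposition to $I$ shows each $q_j$ has the form $P_{U_j}(K_j)$; since $P_\emptyset(G)$ is itself of the form $P_T(H)$, each of these ideals is a sum of ideals $P_T(H)$, and hence so is every element of $\mathcal{P}_I$, being a sum of them. This closes the induction.

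The argument is essentially bookkeeping, and the only real content is supplied by the proposition: the point is that the operation ``pass to a minimal prime'' preserves the class of ideals expressible as $\sum_i P_{T_i}(G_i)$, which is precisely what keeps the recursion inside this class at every level. The one place to be careful is that the special decomposition feeds the minimal primes $q_j$ (rather than $I$ itself) into the next round, so the induction genuinely invokes the proposition twice — once to guarantee the $q_j$ are of the form $P_T(H)$ so that the next-level ideals remain admissible sums, and once at the very end to identify each prime element of $\mathcal{Q}_G$ as a single $P_T(H)$. I do not expect any step to present a serious obstacle beyond tracking the recursion cleanly.
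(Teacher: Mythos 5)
Your proposal is correct and follows exactly the route the paper intends: the paper gives no argument beyond the remark that the proposition together with the construction of $\mathcal{Q}_G$ implies the corollary, and your induction on the construction depth—maintaining the invariant that every ideal produced is a sum $\sum_i P_{T_i}(G_i)$ and invoking the proposition both to identify the minimal primes fed into each new decomposition and to identify each prime element of $\mathcal{Q}_G$ as a single $P_T(H)$—is precisely the bookkeeping that remark leaves implicit.
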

Before stating the Hochster type formula, we need to fix some notation:
\par Let $1_{{\mathcal{Q}_G}}$ be a terminal element that we add to the poset $\mathcal{Q}_G$. Then for every $q\in \mathcal{Q}_G$, by the interval $(q,1_{\mathcal{Q}_G})$, we mean the subposet
\[
\{z\in \mathcal{Q}_G:
 q\precneqq z\precneqq 1_{{\mathcal{Q}_G}}\}
\]
of the poset $\mathcal{Q}_G$.
\par \medskip Now, we are ready to state the Hochster type formula for the local cohomology modules of binomial edge ideals based on \cite[Theorem~3.9]{A}. Moreover, we would like to mention that since the poset $\mathcal{Q}_G$ which we consider in this paper is different from the poset considered by \`Alvarez~Montaner in \cite{A}, we provide a proof for this formula. However, the proof is similar to the one that was proposed in \cite[Theorem~3.9]{A}.
\begin{Theorem}\label{Hochster}
Let $G$ be a graph on $[n]$ and $\mathcal{Q}_G$ be the poset associated to $J_G$. Then we have the $\KK$-isomorphism
\[
H_{\mathfrak{m}}^{i}(S/J_G)\cong\bigoplus_{q \in \mathcal{Q}_G} H_{\mathfrak{m}}^{d_q}(S/q)^{\oplus M_{i,q}},
\]
where $d_q=\dim S/q$ and $M_{i,q}=\dim_{\KK} \widetilde{H}^{i-d_q-1}((q,1_{{\mathcal{Q}_G}});\KK)$.
\end{Theorem}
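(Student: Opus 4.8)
The plan is to adapt the proof of \cite[Theorem~3.9]{A} to the poset $\mathcal{Q}_G$. The engine is a Mayer--Vietoris type spectral sequence associated to a (not necessarily minimal) decomposition of $J_G$, together with the observation that every quotient $S/q$ with $q\in\mathcal{Q}_G$ is Cohen--Macaulay, so that its local cohomology is concentrated in a single cohomological degree. This concentration is exactly what forces the spectral sequence to degenerate and what converts the abstract $E_1$-page into the clean direct sum in the statement.

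First I would record the two structural inputs. By Corollary~\ref{crucial}, each $q\in\mathcal{Q}_G$ has the form $q=P_T(H)$, so by the description in \eqref{hard}, $S/q$ is, up to a polynomial extension, a tensor product over $\KK$ of determinantal rings $S/J_{\widetilde{H}_j}$ of generic $2\times m$ matrices. Such rings are Cohen--Macaulay by classical results on determinantal ideals (see \cite{HHO}), and a tensor product over a field of Cohen--Macaulay algebras is again Cohen--Macaulay; hence $H^{j}_{\mathfrak{m}}(S/q)=0$ for every $j\neq d_q=\dim S/q$. Second, I would check that the special decomposition used in Definition~\ref{typo} genuinely computes each non-prime $I$: since $q_\ell\subseteq q_\ell+P_\emptyset(G)$ for every $\ell$, one has $\bigcap_\ell q_\ell\cap\bigcap_\ell(q_\ell+P_\emptyset(G))=\bigcap_\ell q_\ell=I$, so the enlarged decomposition has the same intersection as the minimal one. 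Consequently all sums occurring during the construction are genuine intermediate ideals whose minimal primes, by the Proposition preceding Corollary~\ref{crucial}, are again of the form $P_T(H)$ and therefore lie in $\mathcal{Q}_G$; the recursion terminates after finitely many steps, as already noted after Definition~\ref{typo}.

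With these in place I would run the spectral sequence. Building the iterated Mayer--Vietoris complex from $J_G=\bigcap_{T\in\mathcal{C}(G)}P_T(G)$, and at each stage replacing a non-prime sum $I$ by its special decomposition, produces a complex whose terms are finite direct sums of modules $S/q$ with $q\in\mathcal{Q}_G$. Applying $H^{\bullet}_{\mathfrak{m}}(-)$ yields a spectral sequence converging to $H^{\bullet}_{\mathfrak{m}}(S/J_G)$ whose first page is assembled from the modules $H^{d_q}_{\mathfrak{m}}(S/q)$. Because of the Cohen--Macaulay concentration, each $S/q$ contributes only in homological degree $d_q$, so the sequence degenerates, and the multiplicity with which $H^{d_q}_{\mathfrak{m}}(S/q)$ survives in $H^{i}_{\mathfrak{m}}(S/J_G)$ is governed purely by how $q$ sits inside the poset. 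Identifying this multiplicity with the reduced simplicial cohomology of the order complex of the open interval $(q,1_{\mathcal{Q}_G})$ --- exactly as in the nerve/order-complex computation underlying Hochster's formula --- gives $M_{i,q}=\dim_{\KK}\widetilde{H}^{\,i-d_q-1}((q,1_{\mathcal{Q}_G});\KK)$.

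The main obstacle, and the place where this proof departs from \cite[Theorem~3.9]{A}, is the degeneration-and-identification step for the poset $\mathcal{Q}_G$ rather than $\mathcal{A}_G$. I must verify that enlarging the decomposition by the ideals $q_\ell+P_\emptyset(G)$ --- which changes the poset but not the intersection --- leaves the spectral sequence converging to the same local cohomology, and that the bookkeeping over the enlarged poset $\mathcal{Q}_G\cup\{1_{\mathcal{Q}_G}\}$ reproduces precisely the reduced cohomology of the intervals $(q,1_{\mathcal{Q}_G})$. In other words, the delicate point is that the extra elements only reshuffle the combinatorial data encoded in the order complexes, without altering the total cohomology, so that the final formula is insensitive to having used the special decomposition in place of the minimal primary one. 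Carrying out this compatibility check faithfully to the argument of \cite{A} is the crux of the proof.
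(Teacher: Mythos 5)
Your proposal follows essentially the same route as the paper: both reduce to verifying the hypotheses of \cite[Theorem~2.4]{A} (equivalently, running the Mayer--Vietoris type spectral sequence of \cite{ABZ}), namely that every $q\in\mathcal{Q}_G$ is of the form $P_T(H)$ so that $S/q$ is Cohen--Macaulay, and that the non-minimality of the decompositions in Definition~\ref{typo} is harmless by \cite[Theorem~5.22]{ABZ}. The one hypothesis you leave implicit is that $\mathcal{Q}_G$ is contained in a distributive lattice of ideals of $S$, which the paper checks by the method of \cite[Proposition~3.7]{A}; adding that to your compatibility step completes the match with the paper's argument.
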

\begin{proof}
Let $q\in \mathcal{Q}_G$. By Corollary \ref{crucial}, we have that $q=P_T(H)$, for some graph $H$ on $[n]$ and some $T\subseteq [n]$. Now, the same method that was used for the proof of \cite[Proposition~3.7]{A} implies that $\mathcal{Q}_{G}$ is a subset of a distributive lattice of ideals of $S$. Moreover, since $S/q \cong \bigotimes_{i=1}^{c_H(T)}S_i/J_{\widetilde{H_i}}$, where $S_i=\KK[x_j,y_j:j\in V(H_i)]$, we have that $S/q$ is a Cohen-Macaulay domain, by \cite[Theorem~2.2]{A}. Therefore, the poset $\mathcal{Q}_G$ fulfills all the required conditions in \cite[Theorem~2.4]{A}. Now, since the minimality of the decompositions of non-prime ideals $I$ appear in Definition \ref{typo} does not matter in \cite[Theorem~5.22]{ABZ}, the result follows by \cite[Theorem~2.4]{A}.
\end{proof}	
\section{Topology of the subposets of the poset associated to binomial edge ideals}\label{topology}
	
In this section we investigate some topological properties of some specific subposets of the poset $\mathcal{Q}_G$ associated to the binomial edge ideal of a graph $G$.
\par The following lemma plays a vital role in our proofs.
\begin{Lemma}\label{vital}
Let $G$ be a graph on $[n]$. Then $q+P_\emptyset(G)\in \mathcal{Q}_G$, for every $q\in \mathcal{Q}_G$.
\end{Lemma}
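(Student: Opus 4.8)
The plan is to argue by induction on the recursive construction of $\mathcal{Q}_G$ given in Definition \ref{typo}, reducing the statement to two facts: that $q+P_\emptyset(G)$ lies in the same generating poset ($\mathcal{P}_G$, or some $\mathcal{P}_I$) from which the relevant generation of $q$ is produced, and that $q+P_\emptyset(G)$ is prime. Underpinning both is a structural invariant that I would first isolate and prove by the same induction: writing each element of $\mathcal{Q}_G$ as $q=P_T(H)$ via Corollary \ref{crucial}, every connected component of $H-T$ has its vertex set contained in a single connected component of $G$. This holds for the initial generators $P_{T_i}(G)$ because deleting vertices only refines the components of $G$; and it persists under the three operations used in the construction — forming sums, passing to minimal primes (via the description in \eqref{hard}), and adding $P_\emptyset(G)$ — because $P_\emptyset(G)$ contributes precisely the cliques on the components of $G$, and the union of cliques that each lie inside a single component of $G$, followed by passage to connected components, can never produce a clique that crosses two components of $G$.

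For the membership bookkeeping I would split on how $q$ is produced. If $q\in\mathcal{P}_G$, then $q=\sum_{i\in A}P_{T_i}(G)$ is a sum of minimal primes of $J_G$; as $\emptyset\in\mathcal{C}(G)$, the ideal $P_\emptyset(G)$ is itself one of these minimal primes, so $q+P_\emptyset(G)$ is again such a sum and hence lies in $\mathcal{P}_G$. If instead $q$ arises inside a poset $\mathcal{P}_I$ built from a non-prime $I$ with minimal primes $q_1,\dots,q_t$, then $q=\sum_{i\in A}q_i+\sum_{j\in B}(q_j+P_\emptyset(G))$ for some $A,B\subseteq\{1,\dots,t\}$. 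When $B\neq\emptyset$ one has $P_\emptyset(G)\subseteq q$, whence $q+P_\emptyset(G)=q\in\mathcal{Q}_G$ and nothing is to prove; when $B=\emptyset$ (so $A\neq\emptyset$), fixing $i_0\in A$ I would rewrite $q+P_\emptyset(G)=\sum_{i\in A\setminus\{i_0\}}q_i+(q_{i_0}+P_\emptyset(G))$, which is again a sum of the generators of $\mathcal{P}_I$ and so belongs to $\mathcal{P}_I$. Thus in every case $q+P_\emptyset(G)$ sits in the generating poset attached to the generation of $q$.

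It remains to show $q+P_\emptyset(G)$ is prime, for only then does membership in $\mathcal{P}_G$ or $\mathcal{P}_I$ force membership in $\mathcal{Q}_G$, whose elements are prime by Definition \ref{typo}. With $q=P_T(H)$, and $P_\emptyset(G)$ the binomial edge ideal of the disjoint union $\widetilde{G}$ of the cliques on the components of $G$, formula \eqref{hard} identifies $q+P_\emptyset(G)$ with $(x_s,y_s:s\in T)+J_{D_1}+\cdots+J_{D_m}$, where $D_1,\dots,D_m$ are the connected components of the graph obtained by superimposing the cliques of $H-T$ onto the cliques on the components of $G$ and then deleting $T$. By the invariant above every clique of $H-T$ lies inside one component $C$ of $G$, while $\widetilde{G}$ already carries the full clique on $C$; hence each $D_i$ is complete (a clique on $C\setminus T$ for some component $C$ of $G$), and therefore $q+P_\emptyset(G)=P_T(\widetilde{G})$. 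An ideal of this shape is prime, since $S/P_T(\widetilde{G})$ is a domain by \cite[Theorem~2.2]{A}. Combining this with the membership established above gives $q+P_\emptyset(G)\in\mathcal{Q}_G$.

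The delicate point is the primality step. A sum of two ideals of the form $P_T(H)$ is in general only radical — by \cite[Corollary~2.2]{HHHKR} — and may break into several minimal primes; this is exactly why non-prime ideals appear in $\mathcal{P}_G$ in the first place, so without control on the position of the cliques of $q$ relative to $G$ one cannot expect $q+P_\emptyset(G)$ to be prime. The structural invariant is what forces the superimposed cliques to collapse onto the cliques of $G$, and the genuinely careful part of the argument is verifying that this invariant propagates through the passage to minimal primes in the recursion, not merely through the sums; once it is secured, both the membership bookkeeping and the primality are routine.
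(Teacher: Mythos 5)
Your proposal is correct and follows essentially the same route as the paper: reduce to showing $q+P_\emptyset(G)$ is prime (the membership in the relevant $\mathcal{P}_G$ or $\mathcal{P}_I$ being forced by the construction in Definition \ref{typo}), write $q=P_T(H)$ via Corollary \ref{crucial}, and prove the key structural claim that the connected components of $H$ sit inside connected components of $G$ by tracking the recursion through \eqref{hard} and the description of $\Min(J_G)$, so that $q+P_\emptyset(G)$ becomes $P_T$ of a disjoint union of complete graphs and hence prime. Your treatment is somewhat more explicit than the paper's on the membership bookkeeping and on propagating the invariant, but the substance is identical.
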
	
\begin{proof}
By the construction of the poset $\mathcal{Q}_G$, it is enough to show that $q+P_{\emptyset}(G)$ is a prime ideal. By Corollary \ref{crucial}, we have that $q=P_{T}(H)$ for some graph $H$ on $[n]$ and some $T\subseteq [n]$.  First we assume that $G$ is connected. Therefore, $q+P_{\emptyset}(G)= (x_i,y_i:i\in T) +J_{K_{n}-T}$, where $K_{n}-T$ denotes the complete graph on $[n]\backslash T$. Therefore, $q+P_{\emptyset}(G)=P_T(K_n)$, which implies that $q+P_\emptyset(G)$ is prime.
\par Next assume that $G$ is a disconnected graph with the connected components $G_1,\ldots,G_r$. We claim that every connected component of the graph $H$ is contained in the graph $G_i$, for some $1\leq i\leq r$. From this claim it will then follow that $q+P_{\emptyset}(G)= (x_i,y_i:i\in T) +\sum _{i=1}^{r} J_{K_{n_i}-T}$, where $n_i=|V(G_i)|$, for every $1\leq i\leq r$. Then $q+P_{\emptyset}(G)= P_T(\cup_{i=1}^{r}K_{n_i})$, which is a prime ideal. 
\par To prove the claim, by the construction of the poset $\mathcal{Q}_G$ and also by virtue of Corollary \ref{crucial} and using \eqref{hard} repeatedly, we may assume that $q\in \Min(J_1+\cdots+J_\ell)$, where $J_i\in \Min(J_G)$, for every $1\leq i\leq \ell$. On the other hand, by \cite[Problem~7.8,~part~(ii)]{HHO}, we have that $J_i=\sum_{j=1}^{r}P_{T_{ij}}(G_j)$ for each $1\leq i\leq \ell$, where $T_{ij}\in \mathcal{C}(G_j)$, for every $1\leq j\leq r$. Therefore, $\sum_{i=1}^{\ell}J_i=\sum_{j=1}^{r}\sum_{i=1}^{\ell}P_{T_{ij}}(G_j)$. This, together with \cite[Problem~7.8,~part~(ii)]{HHO}, Corollary \ref{crucial} and \eqref{hard} imply that the connected components of the graph $H$ should be contained in the connected components of the graph $G$, and hence the claim follows.

\end{proof}

\par The following definition is devoted to recall the concept of meet-contractibility for posets.

\begin{Definition}
\em{A poset $\mathcal{P}$ is said to be \emph{meet-contractible} if there exists an element $\alpha\in \mathcal{P}$ such that $\alpha$ has a meet with every element $\beta\in \mathcal{P}$.}
\end{Definition}
We say that a poset is \emph{contractible} if its order complex is contractible.
\par \medskip We use the following lemma to study the topology of some subposets of the poset $\mathcal{Q}_G$.
\begin{Lemma}\em{(\cite[Theorem~3.2]{BW}, see also \cite[Proposition~2.4]{W})}\label{meet}
\em{Every meet-contractible poset is contractible.}
\end{Lemma}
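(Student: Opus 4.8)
The plan is to exhibit an explicit contraction of the order complex $\Delta(\mathcal{P})$ built from the distinguished element $\alpha$ supplied by meet-contractibility. First I would introduce the map $f\colon \mathcal{P}\to \mathcal{P}$ defined by $f(\beta)=\alpha\wedge\beta$; this is well-defined precisely because $\alpha$ admits a meet with every element of $\mathcal{P}$. Next I would verify that $f$ is order-preserving: if $\beta_1\preccurlyeq\beta_2$, then $\alpha\wedge\beta_1$ is a common lower bound of $\alpha$ and $\beta_2$, so by the universal property of the meet we get $\alpha\wedge\beta_1\preccurlyeq\alpha\wedge\beta_2$. The two inequalities that drive the whole argument are immediate from the definition of the meet as a greatest lower bound, namely $f(\beta)\preccurlyeq\beta$ and $f(\beta)\preccurlyeq\alpha$ for every $\beta\in\mathcal{P}$.

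The crucial input I would invoke is the classical fact from the homotopy theory of posets that comparable order-preserving maps induce homotopic maps on order complexes: if $g,h\colon\mathcal{P}\to\mathcal{P}$ are poset maps with $g(\beta)\preccurlyeq h(\beta)$ for all $\beta$, then the induced continuous maps $\Delta(g)$ and $\Delta(h)$ on $\Delta(\mathcal{P})$ satisfy $\Delta(g)\simeq\Delta(h)$. Applying this with $g=f$ and $h=\id$ (legitimate since $f(\beta)\preccurlyeq\beta$) yields $\Delta(f)\simeq\id_{\Delta(\mathcal{P})}$. Applying it with $g=f$ and $h=c_\alpha$, the constant map sending every element to $\alpha$, which is trivially order-preserving (legitimate since $f(\beta)\preccurlyeq\alpha$), yields $\Delta(f)\simeq\Delta(c_\alpha)$.

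Combining the two homotopies gives $\id_{\Delta(\mathcal{P})}\simeq\Delta(f)\simeq\Delta(c_\alpha)$. Since $\Delta(c_\alpha)$ is a constant map, the identity on $\Delta(\mathcal{P})$ is null-homotopic, which is exactly the statement that $\Delta(\mathcal{P})$, and hence $\mathcal{P}$, is contractible. The only nonformal ingredient is the homotopy lemma for comparable poset maps; I expect this to be the point one must cite or reprove, as everything else is a direct verification. That lemma is itself classical and can be established by realizing an explicit simplicial homotopy on $\Delta(\mathcal{P}\times\{0\prec 1\})$ interpolating between $\Delta(g)$ and $\Delta(h)$; producing that homotopy is the only technically delicate step were one to aim for a fully self-contained proof.
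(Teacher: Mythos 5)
Your argument is correct: the map $f(\beta)=\alpha\wedge\beta$ is a well-defined order-preserving poset map satisfying $f\preccurlyeq\id$ and $f\preccurlyeq c_\alpha$, and the order homotopy lemma then forces $\id_{\Delta(\mathcal{P})}$ to be null-homotopic. The paper itself gives no proof of this lemma, citing Bj\"orner--Walker and Walker instead, and your argument is precisely the classical one found in those references (and in Quillen's work on poset topology), so there is nothing to add beyond the citation for the comparable-maps homotopy lemma that you correctly identify as the only nontrivial ingredient.
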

The following theorem is the main theorem of this section.
\begin{Theorem}\label{north}
Let $G$ be a graph on $[n]$ and assume that $\mathfrak{m} \in \mathcal{Q}_G$. Then $(\mathfrak{m},1_{\mathcal{Q}_G})$ is a contractible poset.
\end{Theorem}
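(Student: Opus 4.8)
The plan is to show that the interval $(\mathfrak{m}, 1_{\mathcal{Q}_G})$ is \emph{meet-contractible} and then invoke Lemma \ref{meet} to conclude contractibility. By Lemma \ref{meet}, it suffices to exhibit a single element $\alpha$ in the interval that admits a meet with every other element of the interval. The natural candidate, motivated directly by Lemma \ref{vital}, is $\alpha = \mathfrak{m} + P_\emptyset(G) = P_\emptyset(K_n)$ when $G$ is connected (and the analogous $P_\emptyset$ of the disjoint union of complete graphs on the components in the disconnected case). The point of Lemma \ref{vital} is precisely that this element lies in $\mathcal{Q}_G$; one must first check it genuinely lies in the open interval, i.e. that $\mathfrak{m} \precneqq \alpha \precneqq 1_{\mathcal{Q}_G}$. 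Since $\mathfrak{m} = (x_1,\ldots,x_n,y_1,\ldots,y_n)$ is the maximal ideal, it strictly contains $P_\emptyset(G)$ (for $n \geq 1$ there are variables not forced to vanish), so under reverse inclusion $\mathfrak{m} \precneqq \alpha$; and $\alpha$ is a proper ideal of $S$, so $\alpha \precneqq 1_{\mathcal{Q}_G}$.

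Next I would verify the meet condition. Recall that under reverse inclusion, the meet of two elements $\beta_1, \beta_2$ in the poset (when it exists in $\mathcal{Q}_G$) is their sum $\beta_1 + \beta_2$, since reverse inclusion makes larger ideals smaller in the order, and the join in the ideal lattice is the sum. So I must show that for every $\beta$ in the open interval $(\mathfrak{m}, 1_{\mathcal{Q}_G})$, the sum $\alpha + \beta$ is again an element of this interval and is the meet of $\alpha$ and $\beta$ there. Using Corollary \ref{crucial}, write $\beta = P_T(H)$. The key computation is that $\alpha + \beta = (P_\emptyset(G) + \mathfrak{m}\text{-part removed}) \ldots$; more precisely, since $\alpha \succcurlyeq \mathfrak{m}$ means $\alpha \subseteq \mathfrak{m}$ is false — rather $\mathfrak{m} \subseteq \alpha$ — I must be careful with the direction. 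The clean statement is: $\alpha + \beta = \beta + P_\emptyset(G)$ together with whatever $\mathfrak{m}$ contributes; because $\beta \succcurlyeq \mathfrak{m}$ forces $\mathfrak{m} \subseteq \beta$, we get $\alpha + \beta = \beta + P_\emptyset(G)$, and Lemma \ref{vital} (applied with $q = \beta$) shows this sum lies in $\mathcal{Q}_G$ and is prime. Thus $\alpha + \beta \in \mathcal{Q}_G$.

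It then remains to confirm that $\alpha + \beta$ actually sits inside the open interval and serves as the meet. Since $\beta \subseteq \alpha + \beta$ and $\alpha \subseteq \alpha + \beta$, under reverse inclusion $\alpha + \beta$ is a lower bound for $\{\alpha, \beta\}$; it is the greatest such because any common lower bound $\gamma$ satisfies $\alpha, \beta \subseteq \gamma$, hence $\alpha + \beta \subseteq \gamma$. For membership in the open interval I would check $\mathfrak{m} \precneqq \alpha + \beta \precneqq 1_{\mathcal{Q}_G}$: the upper strictness holds because $\alpha + \beta$ is a proper (prime) ideal, and the lower strictness $\mathfrak{m} \subsetneq \alpha + \beta$ holds because $\alpha + \beta \supseteq \alpha \supsetneq \mathfrak{m}$. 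Having produced, for each $\beta$, a genuine meet $\alpha \wedge \beta$ inside the interval, the interval is meet-contractible with distinguished element $\alpha$, and Lemma \ref{meet} finishes the argument.

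The main obstacle I anticipate is bookkeeping the order direction consistently, since reverse inclusion repeatedly inverts the intuitive geometry of joins and meets; a secondary subtlety is handling the disconnected case, where $\alpha$ must be taken as $P_\emptyset$ of the disjoint union of complete graphs on the components rather than $P_\emptyset(K_n)$, and where the claim inside the proof of Lemma \ref{vital} about components of $H$ refining components of $G$ is exactly what guarantees $\beta + P_\emptyset(G)$ stays prime. I would state the connected case cleanly and then remark that the disconnected case is identical after replacing $K_n$ by $\bigcup_i K_{n_i}$.
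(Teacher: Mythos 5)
Your overall strategy is exactly the paper's: reduce to meet-contractibility via Lemma \ref{meet}, take the distinguished element to be (in effect) $P_\emptyset(G)$, and produce the meet of $P_\emptyset(G)$ with an arbitrary $\beta$ in the interval as the sum $\beta+P_\emptyset(G)$, whose membership in $\mathcal{Q}_G$ is supplied by Lemma \ref{vital}. The greatest-lower-bound verification you give is also the one in the paper. So the plan is sound; the problems are in the execution, and they are not merely cosmetic.

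First, the element you name is wrong: $\mathfrak{m}+P_\emptyset(G)=\mathfrak{m}$, since $P_\emptyset(G)\subseteq\mathfrak{m}$; it is not $P_\emptyset(K_n)$, and it does not lie in the open interval $(\mathfrak{m},1_{\mathcal{Q}_G})$ at all. The correct choice is simply $\alpha=P_\emptyset(G)$, which is a minimal prime of $J_G$, hence lies in $\mathcal{Q}_G$, and satisfies $P_\emptyset(G)\subsetneqq\mathfrak{m}$ because it contains no variable. (For connected $G$ this ideal happens to equal $J_{K_n}$, which may be the source of the confusion, but it is not obtained by adding $\mathfrak{m}$ to anything; no connected/disconnected case split is needed here, since that is already absorbed into Lemma \ref{vital}.) Second, the order bookkeeping you worried about does in fact go wrong: for $\beta$ in the interval, $\mathfrak{m}\precneqq\beta$ under reverse inclusion means $\beta\subsetneqq\mathfrak{m}$, not $\mathfrak{m}\subseteq\beta$; and your final justification that the meet lies in the open interval, namely ``$\alpha+\beta\supseteq\alpha\supsetneq\mathfrak{m}$,'' is impossible since $\mathfrak{m}$ is maximal. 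The correct argument is the reverse one: $\beta+P_\emptyset(G)\in\mathcal{Q}_G$ by Lemma \ref{vital}, it has the form $P_T(H')$ by Corollary \ref{crucial} with the same $T\subsetneqq[n]$ as $\beta$, and since $\beta\subsetneqq\mathfrak{m}$ and $P_\emptyset(G)$ contributes no variables, $\beta+P_\emptyset(G)\subsetneqq\mathfrak{m}$, i.e.\ $\mathfrak{m}\precneqq\beta+P_\emptyset(G)$. With $\alpha$ replaced by $P_\emptyset(G)$ and these two directions fixed, your argument becomes the paper's proof.
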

\begin{proof}
Let $\mathcal{P}=(\mathfrak{m},1_{\mathcal{Q}_G})$. By Lemma \ref{meet}, it is enough to show that $\mathcal{P}$ is a meet-contractible poset.
\par Clearly, $P_\emptyset(G)\in \mathcal{P}$. Let $q\in \mathcal{P}$. We show that $P_\emptyset(G)$ and $q$ have a meet in $\mathcal{P}$. By Lemma \ref{vital}, we have $q+P_{\emptyset}(G)\in \mathcal{Q}_G$. On the other hand, by Corollary \ref{crucial} $q+P_\emptyset(G)\subsetneqq \mathfrak{m}$, since $q\subsetneqq \mathfrak{m}$ and $P_\emptyset(G)$ does not contain any variable. This implies that $q+P_{\emptyset}(G)\in \mathcal{P}$. 
\par Now we claim that $q+P_{\emptyset}(G)$ is the meet of $q$ and $P_\emptyset(G)$. Clearly, $q+P_{\emptyset}(G)\preccurlyeq q$ and $q+P_{\emptyset}(G)\preccurlyeq P_{\emptyset}(G)$. Now suppose that there exists $q^{'}\in \mathcal{P}$ with $q^{'}\preccurlyeq q$ and $q^{'}\preccurlyeq P_\emptyset (G)$. So, $q^{'}\preccurlyeq q+P_{\emptyset}(G)$. This means  that $q+P_\emptyset(G)$ is the meet of $q$ and $P_\emptyset(G)$, and hence the claim follows. Therefore, $\mathcal{P}$ is a meet-contractible poset.
\end{proof}
\begin{Remark}\label{remark}
\em{Let $\widehat{\mathfrak{m}}=P_T(H)$, where $H$ is an arbitrary graph on $[n]$ with $|T|=n-1$, where $n\geq 2$. Then with the same argument that we used in the proof of Theorem~\ref{north}, one checks that the result of Theorem \ref{north} still holds, if we replace $\widehat{\mathfrak{m}}$ with $\mathfrak{m}$.}
\end{Remark}
Now, as a consequence of Theorem \ref{north} and the above remark, we get the following corollary which will be used in the next section.
\begin{Corollary}\label{sug1}
Let $G$ be a graph on $[n]$ with $n\geq 2$. Let $q$ be an element of the poset $\mathcal{Q}_{G}$ such that $q\in \{ \mathfrak{m},\widehat{\mathfrak{m}}\}$. Then we have  $M_{i,q}=0$, for every $i$. 
\end{Corollary}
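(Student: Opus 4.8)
The plan is to deduce Corollary~\ref{sug1} directly from the Hochster type formula (Theorem~\ref{Hochster}) together with the contractibility statement of Theorem~\ref{north} and Remark~\ref{remark}. Recall that in the formula, the multiplicity attached to each $q\in\mathcal{Q}_G$ is
\[
M_{i,q}=\dim_{\KK}\widetilde{H}^{i-d_q-1}\bigl((q,1_{\mathcal{Q}_G});\KK\bigr),
\]
the reduced simplicial cohomology of the open interval $(q,1_{\mathcal{Q}_G})$ with $\KK$-coefficients. So the content of the corollary is purely topological: I must show that for $q=\mathfrak{m}$ or $q=\widehat{\mathfrak{m}}$, all reduced cohomology groups of the interval $(q,1_{\mathcal{Q}_G})$ vanish.

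First I would reduce the claim to contractibility of the order complex. By definition, the reduced cohomology $\widetilde{H}^{\bullet}((q,1_{\mathcal{Q}_G});\KK)$ is computed from the order complex $\Delta\bigl((q,1_{\mathcal{Q}_G})\bigr)$ of the open interval, viewed as a poset in its own right. Theorem~\ref{north} asserts precisely that when $\mathfrak{m}\in\mathcal{Q}_G$, the interval $(\mathfrak{m},1_{\mathcal{Q}_G})$ is a contractible poset, i.e.\ its order complex is contractible; Remark~\ref{remark} extends the same conclusion verbatim to $\widehat{\mathfrak{m}}$. Since a contractible simplicial complex has trivial reduced cohomology in every degree and over any field, we obtain
\[
\widetilde{H}^{j}\bigl((q,1_{\mathcal{Q}_G});\KK\bigr)=0\quad\text{for all }j,
\]
whenever $q\in\{\mathfrak{m},\widehat{\mathfrak{m}}\}$. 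Setting $j=i-d_q-1$ and running over all $i$ then yields $M_{i,q}=0$ for every $i$, which is exactly the assertion.

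The only genuine care needed is bookkeeping about when these hypotheses actually apply. The hypothesis $n\geq 2$ guarantees that $\widehat{\mathfrak{m}}=P_T(H)$ with $|T|=n-1$ makes sense, so Remark~\ref{remark} is available; and in the case $q=\mathfrak{m}$ one is implicitly using that $\mathfrak{m}$ lies in $\mathcal{Q}_G$, which is the standing hypothesis of Theorem~\ref{north}. I would also note explicitly that if the relevant element is not present in $\mathcal{Q}_G$ there is nothing to prove, since the corollary only concerns those $q\in\mathcal{Q}_G$ that happen to equal $\mathfrak{m}$ or $\widehat{\mathfrak{m}}$. I do not anticipate a hard step here: the substance of the argument has already been discharged in Theorem~\ref{north} and Remark~\ref{remark}, and the corollary is essentially a translation of ``contractible order complex'' into ``vanishing reduced cohomology,'' fed through the definition of $M_{i,q}$. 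The mild subtlety to watch is the degree shift $i-d_q-1$ together with the convention for the empty poset (where $\Delta(\mathcal{P})=\{\emptyset\}$): one should confirm that in every degree, including the edge cases, the contractibility conclusion still forces $M_{i,q}=0$, which it does since contractibility gives vanishing reduced cohomology in all degrees simultaneously.
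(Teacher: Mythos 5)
Your argument is correct and is exactly the route the paper takes: the corollary is stated as an immediate consequence of Theorem~\ref{north} and Remark~\ref{remark}, with contractibility of the order complex of $(q,1_{\mathcal{Q}_G})$ forcing all reduced cohomology groups, and hence every $M_{i,q}$, to vanish. Your additional bookkeeping about the degree shift and the empty-poset convention is careful but not a departure from the paper's reasoning.
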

\section{Characterization of binomial edge ideals of small depth}\label{char}
In this section, as an application of the results provided in the previous section, we characterize all binomial edge ideals $J_G$, for which $\depth S/J_G=4$. First, we state the the following remark that enables us to  simplify the proofs in this section.
\begin{Remark}\label{sug2}
\em{Let $G$ be a graph on $[n]$. Then $d_q\neq 1$, for every $q\in \mathcal{Q}_G$. Indeed, suppose on the contrary that $d_q=1$. By Corollary \ref{crucial} we have that $q=P_T(H)$, for some $T\subseteq [n]$ and some graph $H$ on $[n]$. Now $d_q=1$ implies that either $|T|=n$ and $c_H(T)=1$, or $|T|=n-1$ and $c_H(T)=0$, where both of them are clearly impossible.}
\end{Remark}
Now, we supply a lower bound for the depth of binomial edge ideals that will be used later in our characterization.
\begin{Theorem}\label{lower}
Let $G$ be a graph on $[n]$. Then 
\[\depth S/J_G\geq 4r+\sum_{i=1}^{n}r_i(i+1),\]
where $r$ is the number of non-complete connected components of $G$ and $r_i$ is the number of complete connected components of $G$ of size $i$, for every $1\leq i\leq n$.
\end{Theorem}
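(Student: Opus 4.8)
The plan is to combine the Hochster type formula of Theorem~\ref{Hochster} with the additivity of depth over connected components. The starting point is that, since every $S/q$ with $q\in\mathcal{Q}_G$ is a Cohen--Macaulay domain (as established in the proof of Theorem~\ref{Hochster}), the module $H^{d_q}_{\mathfrak{m}}(S/q)$ never vanishes; hence Theorem~\ref{Hochster} gives
\[
\depth S/J_G=\min\{\,i: M_{i,q}\neq 0\ \text{for some } q\in\mathcal{Q}_G\,\}.
\]
If $G$ has connected components $G_1,\dots,G_c$ on disjoint vertex sets, then $S/J_G\cong\bigotimes_{j}S_j/J_{G_j}$ with $S_j=\KK[x_t,y_t:t\in V(G_j)]$, and the standard additivity of depth under tensor products over $\KK$ (Künneth for local cohomology) yields $\depth S/J_G=\sum_{j}\depth S_j/J_{G_j}$. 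Thus it suffices to prove, for a connected graph $H$ on $m$ vertices, that $\depth S_H/J_H=m+1$ when $H$ is complete and $\depth S_H/J_H\geq 4$ when $H$ is non-complete (in which case necessarily $m\geq 3$); summing these contributions over the components reproduces the asserted bound $4r+\sum_i r_i(i+1)$.

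For a complete component $H=K_m$ the poset is trivial: $K_m$ has no cut vertex, so $J_{K_m}$ is prime and is its own unique minimal prime, whence $\mathcal{Q}_{K_m}=\{J_{K_m}\}$. The interval $(J_{K_m},1_{\mathcal{Q}_{K_m}})$ is then empty, so $M_{i,J_{K_m}}\neq 0$ forces $i-d_{J_{K_m}}-1=-1$, i.e. $i=\dim S_H/J_{K_m}=m+1$; hence $\depth S_H/J_{K_m}=m+1$, as required.

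Now let $H$ be non-complete and connected with $m\geq 3$. I would show $H^{i}_{\mathfrak{m}}(S_H/J_H)=0$ for $i\le 3$, that is, $M_{i,q}=0$ whenever $d_q\le i\le 3$. Writing $q=P_T(H')$ (Corollary~\ref{crucial}), one has $d_q=(m-|T|)+c$, where $c$ is the number of components of $H'-T$ and $1\le c\le m-|T|$ when $|T|<m$; also $M_{i,q}=\dim_{\KK}\widetilde{H}^{\,i-d_q-1}((q,1_{\mathcal{Q}_H}))$ vanishes unless $i\ge d_q$. Since $d_q\neq 1$ by Remark~\ref{sug2}, the elements with $d_q\le 2$ are exactly those with $d_q=0$ (forcing $|T|=m$, so $q=\mathfrak{m}$) and those with $d_q=2$ (forcing $m-|T|=1$ and $c=1$, so $|T|=m-1$, i.e. $q=\widehat{\mathfrak{m}}$); for all of these $M_{i,q}=0$ by Corollary~\ref{sug1}. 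This settles $i\in\{0,1,2\}$. The only remaining case is $i=3$ with $d_q=3$, where $M_{3,q}=\dim_{\KK}\widetilde{H}^{-1}((q,1_{\mathcal{Q}_H}))$ is nonzero precisely when the interval is empty, i.e. when $q$ is a minimal prime $P_T(H)$ of $J_H$ with $\dim S_H/q=3$.

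The hard part is therefore the combinatorial claim that $J_H$ has no minimal prime of dimension $3$ for $H$ connected, non-complete, and $m\ge 3$. Here $\dim S_H/P_T(H)=(m-|T|)+c_H(T)=3$ with $1\le c_H(T)\le m-|T|$ forces $m-|T|=2$ and $c_H(T)=1$: exactly two vertices $a,b$ survive, and they lie in one component of $H-T$, hence are adjacent. Pick any $i\in T$ (possible since $|T|=m-2\ge 1$). Then $H-(T\setminus\{i\})$ is the induced subgraph on $\{a,b,i\}$, in which $a$ and $b$ are adjacent; deleting $i$ leaves the edge $\{a,b\}$ intact and so cannot increase the number of connected components. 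Thus $i$ is not a cut vertex of $H-(T\setminus\{i\})$, contradicting $T\in\mathcal{C}(H)$. Hence no minimal prime of dimension $3$ exists, so $M_{3,q}=0$ for every $q$, giving $\depth S_H/J_H\ge 4$. Feeding the complete and non-complete contributions back into the additivity of the first paragraph then completes the proof.
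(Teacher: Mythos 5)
Your proof is correct and follows essentially the same route as the paper: reduce to connected components via additivity of depth, then use Theorem~\ref{Hochster} together with Remark~\ref{sug2} and Corollary~\ref{sug1} to kill $M_{i,q}$ for all $i\le 3$. The only divergences are local and both are valid: you compute the complete-component contribution directly from the triviality of $\mathcal{Q}_{K_m}$ where the paper cites \cite[Theorem~1.1]{EHH}, and in the critical case $i=d_q=3$ you show the interval $(q,1_{\mathcal{Q}_G})$ is non-empty by ruling out dimension-$3$ minimal primes via a cut-point argument, whereas the paper does so more directly by exhibiting $P_\emptyset(G)$ as an element strictly contained in $q$.
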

\begin{proof}
Let $G_1,\ldots,G_\omega$ be the connected components of $G$. So, $S/J_G\cong \bigotimes_{i=1}^{\omega}S_i/J_{G_i}$, where $S_i=\KK[x_j,y_j:j\in V(G_i)]$. Therefore, $\depth S/J_G=\sum_{i=1}^{\omega}\depth S_i/J_{G_i}$. Then, by \cite[Theorem~1.1]{EHH}, the result follows if we show that $\depth S/J_G\geq 4$, whenever $n\geq 3$. 
\par Now by the definition of depth, it suffices to show that 
$H_{\mathfrak{m}}^{i}(S/J_G)=0$, for all $i$ with $0\leq i \leq 3$. Let $q\in \mathcal{Q}_G$, $d_q=\dim S/q$ and $M_{i,q}=\dim_{\KK} \widetilde{H}^{i-d_q-1}((q,1_{{\mathcal{Q}_G}});\KK)$. By Theorem \ref{Hochster}, it is enough to show that $M_{i,q}=0$, for $i=0,1,2,3$. 
\par First of all, Corollary \ref{crucial} implies that $q=P_{T}(H)$, for some graph $H$ on $[n]$ and some $T\subseteq [n]$. Now, we consider the following cases:
\par Let $i=0$. If $d_q > 0$, the assertion is clear. So we assume that $d_q=0$. We have $\mathrm{height}\hspace{0.9mm}P_T(H)=n-c_H(T)+|T|=2n$. This implies that $|T|-c_H(T)=n$. So~that $q=\mathfrak{m}$, since $|T|=n$ and $c_H(T)=0$. Now the result follows, since the order complex of the poset $(q,1_{\mathcal{Q}_G})$ is not empty. 
\par Let $i=1$. If $d_q\geq 2$, then the assertion is obvious. So, by Remark \ref{sug2} we may assume that $d_q=0$. Then we get $q=\mathfrak{m}$, and hence the result follows by Corollary~\ref{sug1}.
\par Let $i=2$. If $d_q\geq 3$, then the assertion is clear. In addition,  in Remark \ref{sug2} we showed that $d_q\neq 1$. So assume that 
$d_q\in\{0,2\}$. If $d_q=0$, then $q=\mathfrak{m}$. So that the result follows again by Corollary \ref{sug1}. Next suppose that $d_q=2$. Then we have $|T|-c_H(T)=n-2$. This implies that $|T|=n-1$ and $c_H(T)=1$. Therefore, $P_\emptyset(G)\in(q,1_{\mathcal{Q}_G})$, and hence the result follows since the order complex of the poset $(q,1_{\mathcal{Q}_G})$ is non-empty.
\par Let $i=3$. If $d_q\geq 4$, then the assertion holds.  Moreover, the discussion of the cases $d_q=0,1,2$ is similar to the previous cases. So we only need to consider the case $d_q=3$. In this case we have that $|T|=n-2$ and $c_H(T)=1$. So without loss of generality we may assume that 
\[q=(x_1,\ldots,x_{n-2},y_1,\ldots,y_{n-2})+(x_{n-1}y_{n}-x_{n}y_{n-1}).\] 
Now, we have that $P_\emptyset(G)\subsetneqq q$, since $n\geq 3$. Therefore, the order complex of the poset $(q,1_{\mathcal{Q}_G})$ is not empty, and hence the desired result follows.
\end{proof}
Note that the aforementioned lower bound in Theorem \ref{lower}, recovers the stated bound in \eqref{general} in Section \ref{intro}. Indeed, by the notation that we used in Theorem \ref{lower}, and by putting $t=\sum_{i=1}^{n}r_i$, it is not difficult to see that 
\[
\Bigl\lfloor \dfrac{2n-1}{\mathrm{bigheight}\hspace{0.9mm}J_G}\Bigr\rfloor
\leq r+t+1\leq 4r+2t\leq 4r+\sum_{i=1}^{n}r_i(i+1),\]
for every graph $G$ with at least a non-complete connected component. Moreover, we would like to remark that the given lower bound in Theorem \ref{lower} is sharp. For example, consider $G$ to be the graph depicted in Figure~\ref{sharp}. Then we have $\depth S/J_G=4$, by \cite[Theorem~4.4]{KumarS}.
\begin{figure}[H]
\centering
\begin{tikzpicture}[scale=1.1,line cap=round,line join=round,>=triangle 45,x=1.0cm,y=1.0cm]
\draw (-0.5,-1.)-- (-0.72,0.42);
\draw (-0.5,-1.)-- (0.72,0.42);
\draw (-0.5,-1.)-- (1.,0.);
\draw (-0.5,-1.)-- (-1.,0.);
\draw (0.5,-1.)-- (1.,0.);
\draw (0.5,-1.)-- (0.72,0.42);
\draw (0.5,-1.)-- (-0.72,0.42);
\draw (0.5,-1.)-- (-1.,0.);
\draw (0.,0.78)-- (-0.5,-1.);
\draw (0.,0.78)-- (0.5,-1.);
\begin{scriptsize}
\draw [fill=blue] (-0.5,-1.) circle (1.5pt);
\draw [fill=blue] (0.5,-1.) circle (1.5pt);
\draw [fill=blue] (1.,0.) circle (1.5pt);
\draw [fill=blue] (-1.,0.) circle (1.5pt);
\draw [fill=blue] (-0.72,0.42) circle (1.5pt);
\draw [fill=blue] (0.72,0.42) circle (1.5pt);
\draw [fill=blue] (0.,0.78) circle (1.5pt);
\end{scriptsize}
\end{tikzpicture}
\vspace{.25cm}\caption{A complete bipartite graph $G$ with $\depth S/J_G=4$.}
\label{sharp}
\end{figure}
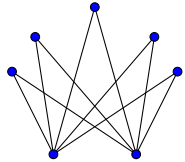

\par \medskip Now we are ready to state our main theorem. Here, we denote by $2K_1$, the graph consisting of two isolated vertices.
\begin{Theorem}\label{small}
Let $G$ be a graph on $[n]$ with $n\geq 4$. Then the following are equivalent:
\begin{enumerate}
\item[{(a)}] $\depth S/J_G=4$.
\item[{(b)}] $G=G'\ast 2K_1$, for some graph $G'$.
\end{enumerate}  
\end{Theorem}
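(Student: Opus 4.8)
The plan is to prove both implications by combining the lower bound of Theorem~\ref{lower} with a careful reading of the Hochster type formula of Theorem~\ref{Hochster}.

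For (a)~$\Rightarrow$~(b) I would first use Theorem~\ref{lower} to force $G$ to be connected and non-complete. Writing $t=\sum_i r_i$ for the number of complete components, the bound $\depth S/J_G\geq 4r+\sum_i r_i(i+1)$ already exceeds $4$ when $r\geq 2$ (giving $\geq 8$) or when $r=1$ and $t\geq 1$ (giving $\geq 4+2=6$); and if $G$ is a disjoint union of cliques on $n\geq 4$ vertices the bound equals $n+t\geq 5$. Hence $\depth S/J_G=4$ forces $r=1$, $t=0$, so $G$ is connected and non-complete and in particular $\depth S/J_G\geq 4$. Thus $\depth S/J_G=4$ is equivalent to $H^4_{\mathfrak m}(S/J_G)\neq 0$, which by Theorem~\ref{Hochster} means $M_{4,q}=\dim_\KK\widetilde H^{3-d_q}((q,1_{\mathcal Q_G});\KK)\neq 0$ for some $q\in\mathcal Q_G$.

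Next I would classify the possible $q$. Writing $q=P_T(H)$ via Corollary~\ref{crucial}, we have $d_q=n+c_H(T)-|T|$, and since reduced cohomology in degrees $<-1$ vanishes we need $d_q\leq 4$; Remark~\ref{sug2} excludes $d_q=1$, and Corollary~\ref{sug1} kills $q\in\{\mathfrak m,\widehat{\mathfrak m}\}$, which are exactly the cases $d_q=0$ and $d_q=2$. So only $d_q\in\{3,4\}$ survive. If $d_q=4$, then $\widetilde H^{-1}$ is nonzero only when $(q,1_{\mathcal Q_G})=\emptyset$, i.e. $q$ is inclusion-minimal in $\mathcal Q_G$, hence a minimal prime $P_T(G)$ with $T\in\mathcal C(G)$ and $\dim S/P_T(G)=4$. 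A short count shows $G-T$ is either connected on three vertices or equals $2K_1$ on two vertices; the first is impossible, since if $G-T$ is connected then removing any $i\in T$ from $G-(T\setminus\{i\})$ cannot increase the number of components, so no $i$ is a cut vertex, contradicting $T\in\mathcal C(G)$. Thus $|T|=n-2$ and $G-T=2K_1$, say on $\{u,v\}$; the cut point property then forces each $i\in T$ to be a cut vertex of the induced graph on $\{i,u,v\}$, which (as $uv\notin E(G)$) requires $i$ adjacent to both $u$ and $v$. Hence $u,v$ are joined to all of $T$ and $G=G[T]\ast 2K_1$, which is (b).

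The remaining and hardest case is $d_q=3$, where $q=(x_i,y_i:i\in T)+(x_uy_v-x_vy_u)$ with $|T|=n-2$ and $u,v$ adjacent in $H$, and where I must decide when $(q,1_{\mathcal Q_G})$ is disconnected ($\widetilde H^0\neq 0$). Here I would mimic the meet-contractibility argument of Theorem~\ref{north}: for connected $G$ we have $P_\emptyset(G)=J_{K_n}\subsetneq q$, so $P_\emptyset(G)\in(q,1_{\mathcal Q_G})$, and by Lemma~\ref{vital} every $z$ in the interval satisfies $z+P_\emptyset(G)\in\mathcal Q_G$ with $z+P_\emptyset(G)\subseteq q$. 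The key point is that $z+P_\emptyset(G)=q$ can happen only when $z$ supplies all the linear forms $x_i,y_i$ ($i\in T$), i.e. when $z$ equals the ideal $\beta:=(x_i,y_i:i\in T)$; for every other $z$ the meet with $P_\emptyset(G)$ stays inside the interval, making it meet-contractible, hence connected, so $M_{4,q}=0$. Therefore a nonzero contribution here forces $\beta\in\mathcal Q_G$; but $\beta=P_T(\Gamma)$ with $\Gamma-T=2K_1$ has $d_\beta=4$, and I expect to show $\beta$ is then an inclusion-minimal element of $\mathcal Q_G$, i.e. a dimension-$4$ minimal prime, which by the previous paragraph again yields $G=G'\ast 2K_1$. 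Pinning down this last step --- that the mere presence of $\beta$ in $\mathcal Q_G$ really produces a minimal prime of the required shape, rather than some intermediate non-minimal element --- is the main obstacle, and is where the fine structure of $\mathcal Q_G$ (Corollary~\ref{crucial} together with \eqref{hard}) must be exploited.

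Finally, for (b)~$\Rightarrow$~(a) I would argue by bounds. If $G=G'\ast 2K_1$, then $G$ is connected and non-complete (the two vertices of $2K_1$ are non-adjacent), so Theorem~\ref{lower} gives $\depth S/J_G\geq 4$. On the other hand any vertex cut must contain all of $V(G')$: removing fewer vertices leaves some $w\in V(G')$, which is adjacent to both vertices of the $2K_1$ and to all remaining vertices of $G'$, keeping $G$ connected. Hence $\kappa(G)=n-2$, and \cite{BN} gives $\depth S/J_G\leq n-\kappa(G)+2=4$, so $\depth S/J_G=4$. Alternatively, one may bypass \cite{BN} and note that $P_{V(G')}(G)$ is a minimal prime with $\dim S/P_{V(G')}(G)=4$, whence Theorem~\ref{Hochster} yields $M_{4,P_{V(G')}(G)}\neq 0$ and therefore $H^4_{\mathfrak m}(S/J_G)\neq 0$.
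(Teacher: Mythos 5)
Your overall strategy for (a)$\Rightarrow$(b) --- reduce to connected non-complete $G$ via Theorem~\ref{lower}, then determine which $q\in\mathcal{Q}_G$ can have $M_{4,q}\neq 0$ --- is sound, and your handling of the cases $d_q\neq 3$ (including the nice direct derivation of the join structure from a minimal prime of dimension $4$ in the case $d_q=4$) matches the substance of the paper, which runs the implication contrapositively but uses the same case division. The genuine gap is exactly where you flag it, in the case $d_q=3$, and the way you propose to close it is based on a false claim: the presence of $\beta=(x_i,y_i:i\in T)$ in the interval does \emph{not} force $\beta$ to be an inclusion-minimal element of $\mathcal{Q}_G$, nor does it force $G=G'\ast 2K_1$. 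For the path $P_5$ of Figure~\ref{path}, $\beta=(x_2,y_2,x_3,y_3,x_4,y_4)$ lies in $\mathcal{Q}_{P_5}$ and in the open interval below $q=(x_2,y_2,x_3,y_3,x_4,y_4,f_{15})$, yet $P_5$ is not a join with $2K_1$ and $\depth S/J_{P_5}=6$; here $\beta$ properly contains the minimal prime $P_{\{3\}}(P_5)=(x_3,y_3,f_{12},f_{45})$, so it is not inclusion-minimal. What actually closes this case (and is the key step of the paper's proof) is a dichotomy: $\beta$ contains some minimal prime $P_U(G)$ with $U\subseteq T$. If $U\subsetneqq T$, then $P_U(G)\subsetneqq\beta$ and, using Lemma~\ref{vital}, the chain $\beta,\ P_U(G),\ P_U(G)+P_\emptyset(G),\ P_\emptyset(G)$ connects $\beta$ to $P_\emptyset(G)$ inside the interval, so connectivity (hence $M_{4,q}=0$) survives even though meet-contractibility fails. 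If $U=T$, then $P_T(G)\subseteq(x_i,y_i:i\in T)$ forces $G-T=2K_1$, and the cut point property yields $G=G'\ast 2K_1$. Without this extra step your argument does not go through.

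Separately, your primary argument for (b)$\Rightarrow$(a) is incorrect: $\kappa(G'\ast 2K_1)$ need not equal $n-2$. If $G'$ is disconnected (e.g.\ $G'=K_1\sqcup K_2$), removing the two vertices of the $2K_1$ already disconnects $G$, so $\kappa(G)\leq 2$ and the bound of \cite{BN} only gives $\depth S/J_G\leq n$; the error is your claim that a surviving $w\in V(G')$ is adjacent to all remaining vertices of $G'$. Fortunately the alternative you sketch is correct and self-contained: $V(G')\in\mathcal{C}(G)$, $P_{V(G')}(G)$ is a minimal prime with $\dim S/P_{V(G')}(G)=4$ and empty open interval, so $M_{4,P_{V(G')}(G)}\neq 0$ and $H^4_{\mathfrak{m}}(S/J_G)\neq 0$, while Theorem~\ref{lower} gives $\depth S/J_G\geq 4$. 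You should promote that to the main argument; the paper instead deduces this direction from the join-product depth formulas of \cite{KumarS}.
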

\begin{proof}
$(a)\Rightarrow (b)$: Assume that $G\neq G'\ast 2K_1$, for any graph $G'$. We show that depth~$S/J_G\geq 5$. By the definition of depth and by Theorem  \ref{lower}, it suffices to show that $H_{\mathfrak{m}}^{4}(S/J_G)=0$. 
\par We keep using the notation that we used in Theorem \ref{lower}. Let $q\in \mathcal{Q}_G$, $d_q=\dim S/q$ and $M_{4,q}=\dim_{\KK} \widetilde{H}^{3-d_q}((q,1_{{\mathcal{Q}_G}});\KK)$. By Theorem \ref{Hochster}, the result follows once we show that $M_{4,q}=0$. Notice that Corollary \ref{crucial} implies that $q=P_{T}(H)$, for some graph $H$ on $[n]$ and some $T\subseteq [n]$. Note also that if $d_q\geq 5$, then the assertion is obvious. On the other hand, as we discussed in Remark \ref{sug2}, we have $d_q\neq 1$. So we consider the following cases:
\par Let $d_q=0$. So $q=\mathfrak{m}$, since $|T|=n$ and $c_H(T)=0$. Therefore, $M_{4,q}=0$, by Corollary \ref{sug1}.
\par Let $d_q=2$. We have $|T|-c_H(T)=n-2$. This implies that $|T|=n-1$ and $c_H(T)=1$. Now the result follows by Corollary \ref{sug1}.
\par Let $d_q=3$. We need to show that the order complex of the poset $(q,1_{\mathcal{Q}_G})$ is connected. Note that since $\mathrm{height}\hspace{0.9mm}q=n-c_H(T)+|T|=2n-3$, we have $|T|=n-2$ and $c_H(T)=1$, and hence we may assume that
\[q=(x_1,\ldots,x_{n-2},y_1,\ldots,y_{n-2})+(x_{n-1}y_{n}-x_{n}y_{n-1}).\] 
\par Now suppose that $q_{1}, q_{2}\in (q,1_{\mathcal{Q}_G})$ and $q_1\neq q_2$. By Corollary \ref{crucial}, we have that $q_1=P_{T_1}(H_1)$ and $q_2=P_{T_2}(H_2)$, for some graphs $H_1$ and $H_2$ on $[n]$ and some $T_1,T_2\subseteq [n]$. Moreover, we have that $T_1,T_2\subseteq \{1,\ldots,n-2\}$, since $q_{1}, q_{2}\in (q,1_{\mathcal{Q}_G})$. Now without loss of generality we may assume that $T_1\subseteq \{1,\ldots,n-2\}$ and $T_2\subsetneqq \{1,\ldots,n-2\}$, since $q_1\neq q_2$. We first assume that $T_1\subsetneqq \{1,\ldots,n-2\}$. Thus we have $q_{1}+P_{\emptyset}(G)\in (q,1_{\mathcal{Q}_G})$ and $q_{2}+P_{\emptyset}(G)\in (q,1_{\mathcal{Q}_G})$, by Lemma \ref{vital}. So we get the path
\[q_{1}, q_{1}+P_{\emptyset}(G), P_{\emptyset}(G), q_{2}+P_{\emptyset}(G), q_{2}\]
in the 1-skeleton graph of the order complex of the poset $(q,1_{\mathcal{Q}_G})$.
\par Next assume that $T_1=\{1,\ldots,n-2\}$. Now since the set of maximal elements of the poset $\mathcal{Q}_G$ coincides with the set of minimal prime ideals of $J_G$, there exists $U\in \mathcal{C}(G)$ such that 
$P_U(G)\subseteq q_1$. We show that $U\subsetneqq \{1,\ldots,n-2\}$. Indeed, otherwise by \cite[Corollary~3.9]{HHHKR} we get $G=L\ast 2K_1$, where $L=G-\{n-1,n\}$. This contradicts the fact that $G\neq G'\ast 2K_1$, for any graph $G'$. So, $P_U(G)\subsetneqq q_1$. On the other hand, $P_U(G)+P_{\emptyset}(G)\in (q,1_{\mathcal{Q}_G})$ by Lemma \ref{vital}. Therefore, we get the path 
\[
q_{1}, P_U(G), P_U(G)+P_{\emptyset}(G), P_{\emptyset}(G), q_{2}+P_{\emptyset}(G), q_{2}\]
in the 1-skeleton graph of the order complex of the poset $(q,1_{\mathcal{Q}_G})$.
\par Therefore, the order complex of the poset $(q,1_{\mathcal{Q}_G})$ is connected, as desired.
\par Now let $d_q=4$. Therefore, $|T|-c_H(T)=n-4$. So, the following cases occur:
\par First assume that $|T|=n-2$ and $c_H(T)=2$. Therefore, we may assume that  $q=(x_1,\ldots,x_{n-2},y_1,\ldots,y_{n-2})$. Then by a similar method that we used in the last part of the case $d_q=3$, there exists $W\in \mathcal{C}(G)$ such that $P_W(G)\subsetneqq q$. This means that $P_W(G)\in (q,1_{\mathcal{Q}_G})$, and hence $(q,1_{\mathcal{Q}_G})$ is non-empty.
\par Next assume that $|T|=n-3$ and $c_H(T)=1$. So, we may assume that 
\[q=(x_1,\ldots,x_{n-3},y_1,\ldots,y_{n-3},x_{n-2}y_{n-1}-x_{n-1}y_{n-2},
x_{n-2}y_{n}-x_{n}y_{n-2},
x_{n-1}y_{n}-x_{n}y_{n-1}).
\]
Now, we have $P_\emptyset(G)\in (q,1_{\mathcal{Q}_G})$, since $n\geq 4$. Thus, $(q,1_{\mathcal{Q}_G})$ is non-empty.
\par Therefore, $M_{4,q}=\dim_{\KK} \widetilde{H}^{-1}((q,1_{\mathcal{Q}_G});\KK)=0$.
\par \medskip $(b)\Rightarrow (a)$: Assume that $G=G^{'}\ast 2K_1$, for some graph $G^{'}$. Now, if $G'$ is a complete graph then, the result follows from \cite[Theorem~3.9]{KumarS}. So, assume that $G'$ is not complete. Therefore, by Theorem \ref{lower}, we have $\depth S'/J_{G'}\geq 4$, where $S'=\KK[x_i,y_i:i\in V(G')]$. This, together with \cite[Theorem~4.3]{KumarS} and \cite[Theorem~4.4]{KumarS}, imply the result.
\end{proof}

\vspace{1cm}
\par \textbf{Acknowledgments:} The authors would like to thank Josep \`Alvarez Montaner and the anonymous referee for their useful comments. The authors would also like to thank Institute for Research in Fundamental Sciences (IPM) for financial support. The research of the second author was in part supported by a grant from IPM (No. 99130013). The research of the third author was in part supported by a grant from IPM (No. 99050212).


\begin{thebibliography}{99}

\bibitem{A} J. \`Alvarez Montaner, {\em Local cohomology of binomial edge ideals and their generic initial ideals}, Collect. Math. 71 (2020), 331-348.


\bibitem{ABZ} J. \`Alvarez Montaner, A. F. Boix, S. Zarzuela, {\em On some local cohomology spectral sequences}, 
 Int. Math. Res. Not. 19 (2020), 6197-6293.


	
\bibitem{BN} A. Banerjee, L. L. N\'u\~nez-Betancourt, {\em  Graph connectivity and binomial edge ideals}, 
Proc. Amer. Math. Soc. 145 (2017), 487-499.
		
		
		
\bibitem{BW} A. Bj\"orner, J. W. Walker, {\em A homotopy complementation formula for partially ordered sets}, 
European J. Combin. 4 (1983), 11-19.	

\bibitem{BMS} D. Bolognini, A. Macchia, F. Strazzanti, {\em Binomial edge ideals of bipartite graphs}, 
European J. Combin. 70 (2018), 1-25.



	
\bibitem{CV} A. Conca, M. Varbaro, {\em Square-free Gr\"obner degenerations}, Invent. Math. 221 (2020), 713-730.

\bibitem{DDM} H. Dao, A. De Stefani, L. Ma, {\em Cohomologically full rings}, 
 Int. Math. Res. Not. (2019), https://doi.org/10.1093/imrn/rnz203.
	
	
	
			
\bibitem{EHH} V. Ene, J. Herzog, T. Hibi, {\em Cohen-Macaulay binomial edge ideals}, Nagoya Math. J. 204 (2011), 57-68.
	
	
\bibitem{ERT} V. Ene, G. Rinaldo, N. Terai, {\em Licci binomial edge ideals}, J. Combin. Theory Ser. A. 175 (2020), 105278, 23pp.
			
	
	
\bibitem{EZ} V. Ene, A. Zarojanu, {\em On the regularity of binomial edge ideals}, Math. Nachr. 288(1) (2015), 19-24.
		

\bibitem{F} G. Faltings, {\em \"Uber lokale kohomologiegruppen hoher ordnung}, J. Reine Angew . Math. 313 (1980), 43-51.
		
			
\bibitem{HHHKR} J. Herzog, T. Hibi, F. Hreinsd{\'o}ttir, T. Kahle, J. Rauh, {\em Binomial edge ideals and conditional independence statements}, Adv. Appl. Math. 45 (2010), 317-333.
		
\bibitem{HHO}  J. Herzog, T. Hibi, H. Ohsugi, {\em Binomial ideals}, Springer.  (2017).		
		
\bibitem{KS} D. Kiani, S. Saeedi Madani, {\em Binomial edge ideals with pure resolutions}, Collect. Math. 65 (2014), 331-340.
		

	
\bibitem{KS1} D. Kiani, S. Saeedi Madani, {\em Some Cohen-Macaulay and unmixed binomial edge ideals}, Comm. Algebra. 43 (2015), 5434-5453.


\bibitem{KS2} D. Kiani, S. Saeedi Madani, {\em The Castelnuovo-Mumford regularity of binomial edge ideals}, J. Combin. Theory Ser. A. 139 (2016), 80-86.


\bibitem{KumarS} A. Kumar, R. Sarkar, {\em Depth and extremal Betti number of binomial edge ideals}, Math. Nachr. 293(9) (2020), 1746-1761. 
		

\bibitem{MM} K. Matsuda, S. Murai, {\em Regularity bounds for binomial edge ideals}, J. Commutative Algebra. 5(1) (2013), 141-149.

\bibitem{O} M. Ohtani, {\em Graphs and ideals generated by some 2-minors}, Comm. Algebra. 39 (2011), 905-917.
		
\bibitem{RSK} M. Rouzbahani Malayeri, S. Saeedi Madani, D. Kiani, {\em Regularity of binomial edge ideals of chordal graphs}, Collect. Math. (2020), https://doi.org/10.1007/s13348-020-00293-3.
		


		
\bibitem{SK} S. Saeedi Madani, D. Kiani, {\em Binomial edge ideals of graphs}, Electronic J. Combin. 19(2) (2012), $\sharp$ P44.
		
		
\bibitem{SK2} S. Saeedi Madani, D. Kiani, {\em Binomial edge ideals of regularity 3}, J. Algebra. 515 (2018), 157-172.
	

		
\bibitem{W} J. W. Walker, {\em Homotopy type and Euler characteristic of partially ordered sets}, 
European J. Combin. 2 (1981), 373-384.
	
\bibitem{ZZ} Z. Zahid, S. Zafar, {\em On the Betti numbers of some classes of binomial edge ideals}, The Electronic Journal of Combinatorics. 20(4) (2013), $\sharp$ P37.
			
	
		
		
	\end{thebibliography}
\end{document}